\theoremstyle{plain}
\newtheorem{theorem}{Theorem}[section]
\newtheorem{lemma}[theorem]{Lemma}
\newtheorem{proposition}[theorem]{Proposition}
\theoremstyle{definition}
\theoremstyle{remark}
\newtheorem{remark}{Remark}
    \DeclareMathOperator\Span{span}
    \DeclareMathOperator\sign{sign}
    \DeclareMathOperator\hor{H}
    \DeclareMathOperator\HS{HS}
    \DeclareMathOperator\trace{Tr}
\begin{document}

\begin{frontmatter}



\title{Decay estimates for the linear damped wave equation on the Heisenberg group}


%

\author{Alessandro Palmieri}

\address{Department of Mathematics, University of Pisa, Largo B. Pontecorvo 5, 56127 Pisa, Italy}

\begin{abstract}
This paper is devoted to the derivation of $L^2$ - $L^2$ decay estimates for the solution  of the homogeneous linear damped wave equation on the Heisenberg group $\mathbf{H}_n$, for its time derivative and for its horizontal gradient. Moreover, we consider the improvement of these estimates when further $L^1(\mathbf{H}_n)$ regularity is required for the Cauchy data. 
 Our approach will rely strongly on the group Fourier transform of $\mathbf{H}_n$ and on the properties of the Hermite functions that form a maximal orthonormal system for $L^2(\mathbb{R}^n)$ of eigenfunctions of the harmonic oscillator. 
\end{abstract}



\begin{keyword} damped wave equation \sep decay estimates \sep Heisenberg group \sep group Fourier transform \sep Hermite functions


 \MSC[2010] Primary: 
  35L10 \sep  35R03 \sep 58J45; Secondary:  
   33C45 \sep  43A30 \sep 43A80

\end{keyword}

\end{frontmatter}


\section{Introduction}

In this work, we derive decay estimates for the solution of the damped wave equation on the Heisenberg group, for its time derivative and for its horizontal gradient. In other words, we estimate the $L^2(\mathbf{H}_n)$ - norm  of the solution $u$ to the linear Cauchy problem
\begin{align}\label{damped wave equation Heisenberg}
\begin{cases}
\partial_t^2 u(t,\eta)-\Delta_{\hor} u(t,\eta)+\partial_t u(t,\eta)= 0, & \eta\in \mathbf{H}_n, \ t>0, \\
u(0,\eta)= u_0(\eta), & \eta\in \mathbf{H}_n, \\
\partial_t u(0,\eta)= u_1(\eta), & \eta\in \mathbf{H}_n,
\end{cases}
\end{align} and its first order derivatives $\partial_t u$ and $\nabla_{\hor}u$.

In the last decades, several papers have been devoted to the study of linear PDE in not-Euclidean structures. Let us recall some results from the liturature in the case of Lie group for two equations, the heat equation and the wave equation, which are close, in some sense, to the equation that we will study in this paper, the damped wave equation.  The properties of the solutions to the heat equation on the Heisenberg group or on more general H-type groups (and of the corresponding \emph{heat kernels}) are investigated in \cite{Gav77,BGG00,GL17,BC18}. In the case of compact Lie group we refer to \cite{Feg78,Feg91} or to \cite{Gre13} for the special case $\mathbb{S}^{2n+1}$. For the description of the heat kernel in a nilpotent Lie group we refer to the pioneering work of Folland \cite{Fol75} and to \cite{VSC92}. On the other hand, for the wave equation on the Heisenberg group or on H-type groups we have to mention \cite{Nac82,MS99,BGX00,GHK02,MS15} while in the framework of compact group we cite \cite{BP11,GR15}. In the case of graded groups, we refer the recent works \cite{Tar18,RT17,RY18}, where the well-posedness of the wave equation is studied in the case of time-dependent speed of propagation $a=a(t)$ in different function spaces, depending on the assumption on $a$.

The main idea in what follows is to generalize some well-know facts in the phase space analysis (\emph{WKB analysis}) for the Euclidean case (cf. \cite{ER18} for an overview on this topic) to the case of the Heisenberg group.

The \emph{Heisenberg group} is the Lie group $\mathbf{H}_n=\mathbb{R}^{2n+1}$ equipped with the multiplication rule
\begin{equation*}
(x,y,\tau)\circ (x',y',\tau')= \left(x+x',y+y',\tau+\tau'+\tfrac{1}{2} (x\cdot y'-x'\cdot y)\right),
\end{equation*} where $\cdot$ denotes the standard scalar product in $\mathbb{R}^n$. 
 A system of left-invariant vector fields that span the Lie algebra $\mathfrak{h}_n$ is given by
\begin{align*}
 \ X_j  \doteq  \partial_{x_j}-\frac{y_j}{2}  \,\partial_\tau,   \ Y_j  \doteq  \partial_{y_j}+\frac{x_j}{2}  \, \partial_\tau ,  \ T\doteq \partial_\tau,
\end{align*} where $1\leq j\leq n$. This system satisfies the commutation relations
\begin{align*}
[X_j,Y_k]=\delta_{jk} \, T \quad \mbox{for}  \ 1\leq j,k \leq n.
\end{align*} Therefore, $\mathfrak{h}_n$ admits the stratification $\mathfrak{h}_n= V_1\oplus V_2$, where $V_1\doteq \Span\{X_j,Y_j\}_{1\leq j\leq n}$ and $V_2\doteq \Span\{T\}$. This means that $\mathbf{H}_n$ is a 2 step stratified Lie group, whose homogeneous dimension is $\mathcal{Q}=2n+2$.
The \emph{sub-Laplacian} on $\mathbf{H}_n$ is defined as
\begin{align}
\Delta_{\hor} & \doteq \sum_{j=1}^n (X^2_j+Y_j^2)
= \sum_{j=1}^n \big(\partial_{x_j}^2+\partial_{y_j}^2\big)+\frac{1}{4} \sum_{j=1}^n \big(x_j^2+y_j^2\big)\partial_{\tau}^2   +\sum_{j=1}^n\Big(x_j \, \partial_{y_j\tau}^2-y_j \,\partial_{ x_j \tau}^2\Big).\label{def sub laplacian}
\end{align} 
For a function $v:\mathbf{H}_n\to \mathbb{R}$, the \emph{horizontal gradient} of $v$ is 
\begin{align*}
\nabla_{\hor} v \doteq (X_1 v, \cdots, X_n v, Y_1 v, \cdots, Y_n v) \equiv \sum_{j=1}^n ((X_j v)X_j +(Y_j v)Y_j),
\end{align*} where each fiber of the horizontal subbundle can be endowed with a scalar product  in such a way that the  horizontal vector fields $X_1,\cdots,X_n,Y_1,\cdots,Y_n$ are orthonormal in each point $\eta\in \mathbf{H}_n$. For a function $v\in L^2(\mathbf{H}_n)$ we say that $X_j v, Y_j v\in L^1_{\mathrm{loc}}(\mathbf{H}_n)$ exist \emph{in the sense of distributions}, if the integral relations
\begin{align*}
\int_{\mathbf{H_n}} \big(X_j v \big)(\eta) \, \phi(\eta) \, \mathrm{d}\eta = \int_{\mathbf{H_n}}  v (\eta) \, \big(X_j^* \phi\big)(\eta) \, \mathrm{d}\eta \quad \mbox{and} \quad \int_{\mathbf{H_n}} \big(Y_j v \big)(\eta) \, \phi(\eta) \, \mathrm{d}\eta = \int_{\mathbf{H_n}}  v (\eta) \, \big(Y_j^* \phi\big)(\eta) \, \mathrm{d}\eta
\end{align*} are fulfilled for any $\phi \in \mathcal{C}^\infty_0(\mathbf{H}_n)$, where $X_j^*=-X_j$ and $Y_j^*=-Y_j$ denote the formal adjoint operators of $X_j$ and $Y_j$, respectively. Therefore, in our framework, the \emph{Sobolev space} $H^1(\mathbf{H}_n)$ is the set of all functions $v\in L^2(\mathbf{H}_n)$ such that $X_j v, Y_j v$ exist in the sense of distributions and $X_j v, Y_j v \in  L^2(\mathbf{H}_n) $ for any $j=1,\cdots,n$.


%

In \cite{RT18}, some ideas of the phase space analysis have been applied to the study of semilinear damped wave equation with  mass on graded Lie groups $\mathbb{G}$ for a positive Rockland operator $\mathcal{R}$, namely, the semilinear Cauchy problem 
\begin{align}\label{damped wave equation wih mass Rockland }
\begin{cases}
\partial_t^2 u(t,x)+\mathcal{R} u(t,x)+\partial_t u(t,x)+u(t,x)= |u|^p, & x\in \mathbb{G}, \ t>0, \\
u(0,x)= u_0(x), & x\in \mathbb{G}, \\
\partial_t u(0,x)= u_1(x), & x\in \mathbb{G}.
\end{cases}
\end{align}
 The main tool for this purpose is the group Fourier transform on $\mathbb{G}$. In order to deal with this family of bounded operators $\widehat{u}(\pi)$ associated to  irreducible unitary representations $\pi \in \widehat{\mathbb{G}}$  on some Hilbert spaces $H_{\pi}$, the authors of \cite{RT18} consider for each of these bounded operators its projection on a suitable basis of eigenfunctions for $H_\pi$ that allows to  \textquotedblleft diagonalize\textquotedblright $\,$ the operator given by the action of the infinitesimal representation $\mathrm{d}\pi$  on $\mathcal{R}$. As particular case, the Heisenberg group is considered with $\mathcal{R}=-\Delta_{\hor}$. Due to the presence of a mass term in the linear part of the damped wave equation, an exponential decay rate is obtained for the $L^2(\mathbb{G})$-norms of the solution of the corresponding homogeneous linear problem and its derivatives. Therefore, thanks to this extremely fast decay rate they can work with data just on $L^2(\mathbb{G})$ basis (and regularity related to the order of the derivative, of course). As a consequence, they need only the fact that $\mathrm{d}\pi (\mathcal{R})$ has discrete positive spectrum for any nontrivial unitary representation $\pi$ of $\mathbb{G}$ and they do not have to specifying neither the growth rate of the eigenvalues, nor the action of $\mathrm{d}\pi$ on some of the generators of the Lie algebra of $\mathbb{G}$ to estimates the $L^2(\mathbb{G})$-norm of derivatives with respect to $x$, nor an explicit description of Plancherel measure.
 
 Nevertheless, if we do remove the mass term in \eqref{damped wave equation wih mass Rockland }, the situation changes drastically, since we cannot get exponential decay. In order to find still some decay rate, in this case of polynomial kind depending on the order and on the type of the derivative, we shall require further additional $L^1$ regularity for the Cauchy data. However, we will restrict our considerations to the case of the Heisenberg group with the sub-Laplacian,  where the irreducible unitary representations are given, up to intertwining operators, by Schr\"odinger representations whose infinitesimal representations act on the generators of the first layer of $\mathfrak{h}_n$ in a known way, the Plancherel measure can be expressed through the parameter that parameterizes Schr\"odinger representations  and the operator that has to be diagonalized is simply the harmonic oscillator (cf. Section \ref{Section group fourier transf}).
 
 Also, the purpose of this paper is to develop a phase space analysis for the Cauchy problem \eqref{damped wave equation Heisenberg}. More precisely, our goal is to derive decay estimates on $L^2(\mathbf{H}_n)$ - basis with possible additional $L^1(\mathbf{H}_n)$ - regularity for the Cauchy data as it is stated in the next key theorem.

\begin{theorem}\label{Main Thm}
Let  $(u_0,u_1)\in  H^1(\mathbf{H}_n) \times L^2(\mathbf{H}_n)$ and let $u\in\mathcal{C}([0,\infty), H^1(\mathbf{H}_n))\cap \mathcal{C}^1([0,\infty), L^2(\mathbf{H}_n))$ solve the Cauchy problem \eqref{damped wave equation Heisenberg}. Then, the following estimates are satisfied
\begin{align} \label{estimate u only L2}
\| u(t,\cdot)\|_{L^2(\mathbf{H_n})} & \leq C \left(\|u_0\|_{L^2(\mathbf{H}_n)}+\|u_1\|_{L^2(\mathbf{H}_n)}\right) \\ 
\label{estimate nabla hor u only L2} \| \nabla_{\hor} u(t,\cdot)\|_{L^2(\mathbf{H_n})} & \leq C (1+t)^{-\frac{1}{2}} \left(\|\nabla_{\hor}u_0\|_{L^2(\mathbf{H}_n)}+\|u_1\|_{L^2(\mathbf{H}_n)}\right)  \\ 
\label{estimate partial t u only L2} \| \partial_t u(t,\cdot)\|_{L^2(\mathbf{H_n})} & \leq C (1+t)^{-1} \left(\|\nabla_{\hor}u_0\|_{L^2(\mathbf{H}_n)}+\|u_1\|_{L^2(\mathbf{H}_n)}\right)
\end{align}
for any $t\geq 0$.
Furthermore, if we assume additionally that $u_0,u_1\in L^1(\mathbf{H}_n)$, then, the following decay estimates are satisfied
\begin{align} \label{estimate u}
\| u(t,\cdot)\|_{L^2(\mathbf{H_n})} & \leq C (1+t)^{-\frac{\mathcal{Q}}{4}} \left(\|u_0\|_{L^1(\mathbf{H}_n)}+\|u_0\|_{L^2(\mathbf{H}_n)}+\|u_1\|_{L^1(\mathbf{H}_n)}+\|u_1\|_{L^2(\mathbf{H}_n)}\right) \\ 
\label{estimate nabla hor u} \| \nabla_{\hor} u(t,\cdot)\|_{L^2(\mathbf{H_n})} & \leq C (1+t)^{-\frac{\mathcal{Q}}{4}-\frac{1}{2}} \left(\|u_0\|_{L^1(\mathbf{H}_n)}+\|\nabla_{\hor}u_0\|_{L^2(\mathbf{H}_n)}+\|u_1\|_{L^1(\mathbf{H}_n)}+\|u_1\|_{L^2(\mathbf{H}_n)}\right)  \\ 
\label{estimate partial t u} \| \partial_t u(t,\cdot)\|_{L^2(\mathbf{H_n})} & \leq C (1+t)^{-\frac{\mathcal{Q}}{4}-1} \left(\|u_0\|_{L^1(\mathbf{H}_n)}+\|\nabla_{\hor}u_0\|_{L^2(\mathbf{H}_n)}+\|u_1\|_{L^1(\mathbf{H}_n)}+\|u_1\|_{L^2(\mathbf{H}_n)}\right)
\end{align}
for any $t\geq 0$. Here $C$ is a positive constant.
\end{theorem}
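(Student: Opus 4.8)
The plan is to use the group Fourier transform on $\mathbf{H}_n$ to reduce the Cauchy problem to a family of ODEs parametrised by the Schrödinger representation parameter $\lambda \in \mathbb{R}\setminus\{0\}$ and the Hermite multi-index. Since $\mathrm{d}\pi_\lambda(-\Delta_{\hor})$ acts diagonally on the Hermite basis with eigenvalues of the form $|\lambda|(2|\mu|+n)$ (which I will denote generically by $\xi^2 = |\lambda|(2|\mu|+n)$), conjugating \eqref{damped wave equation Heisenberg} by the Fourier transform and projecting onto each Hermite eigenfunction yields, for every such eigenvalue, the scalar ODE $v'' + v' + \xi^2 v = 0$ with data $v(0)=\widehat{u_0}$-coefficient and $v'(0)=\widehat{u_1}$-coefficient. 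The characteristic roots are $\frac{1}{2}(-1\pm\sqrt{1-4\xi^2})$, so the solution is an explicit combination of the \emph{multipliers}
\begin{align*}
K_0(t,\xi)\doteq \frac{\partial}{\partial t}\widehat{E}(t,\xi)+\widehat{E}(t,\xi), \qquad K_1(t,\xi)\doteq \widehat{E}(t,\xi),
\end{align*}
where $\widehat{E}(t,\xi)=\frac{e^{\lambda_+(\xi)t}-e^{\lambda_-(\xi)t}}{\lambda_+(\xi)-\lambda_-(\xi)}$ with $\lambda_\pm(\xi)=\frac{1}{2}(-1\pm\sqrt{1-4\xi^2})$. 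The whole problem thus reduces to pointwise bounds on $K_0$, $K_1$, $\xi K_1$ (the latter governing the horizontal gradient, since $\|\nabla_{\hor}u\|^2$ transforms into a weighted sum involving $\xi^2|v|^2$) and then to summing/integrating these bounds against the Plancherel measure $c_n|\lambda|^n\,\mathrm{d}\lambda$.

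The key technical step is the case distinction on whether $\xi^2$ is small or large compared with $1/4$, i.e. whether the characteristic roots are real or complex. In the \emph{large frequency} regime $\xi^2 \geq 1$ (say), $\re\lambda_\pm = -1/2$, so one gets exponential decay $|\widehat{E}(t,\xi)|\lesssim e^{-t/2}|\xi|^{-1}$, $|\partial_t\widehat{E}(t,\xi)+\widehat{E}(t,\xi)|\lesssim e^{-t/2}$, and $|\xi\widehat{E}(t,\xi)|\lesssim e^{-t/2}$ — these are harmless and give the $L^2$-$L^2$ estimates \eqref{estimate u only L2}--\eqref{estimate partial t u only L2} after noting that $\xi^2$ factors pair up with the horizontal-gradient norms of the data via Plancherel. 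In the \emph{small frequency} regime $\xi^2 \leq 1/4$, the dominant root behaves like $\lambda_+(\xi)\approx -\xi^2$, giving the parabolic-type bounds
\begin{align*}
|\widehat{E}(t,\xi)| \lesssim e^{-c\xi^2 t}, \qquad |\partial_t \widehat{E}(t,\xi)+\widehat{E}(t,\xi)| \lesssim e^{-c\xi^2 t}(\xi^2 t + \langle\xi\rangle^{-?}\cdots),
\end{align*}
more precisely $|K_0(t,\xi)|\lesssim \min\{1,(\xi^2 t)\}e^{-c\xi^2 t}+e^{-t/2}\lesssim e^{-c\xi^2 t}$ and, crucially, $|K_0(t,\xi)|\lesssim \xi^2 e^{-c\xi^2 t}+e^{-t/2}$ when we want the extra $t^{-1}$ decay for $\partial_t u$, while $|\xi K_1(t,\xi)|\lesssim |\xi| e^{-c\xi^2 t}+e^{-t/2}$. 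I would isolate these as a preliminary lemma on the two multipliers, proved by elementary estimates on the elementary function $\widehat{E}$ (Taylor expansion of $\sqrt{1-4\xi^2}$ near $\xi=0$).

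For the pure $L^2$ estimates, Plancherel on $\mathbf{H}_n$ turns $\|u(t,\cdot)\|_{L^2}^2$ into $\int_{\mathbb{R}\setminus\{0\}} \|\widehat{u}(t,\lambda)\|_{\HS}^2\, c_n|\lambda|^n\,\mathrm{d}\lambda$, which decomposes as a sum over Hermite indices; bounding each multiplier by its uniform bound ($|K_1|\lesssim 1$, $|\xi K_1|\lesssim (1+t)^{-1/2}$, $|K_0|\lesssim (1+t)^{-1}$ after using $\xi^2 e^{-c\xi^2 t}\lesssim (1+t)^{-1}$ — though here one must be careful: the uniform-in-$\xi$ bound for $K_0$ is only $\lesssim 1$, and to get $(1+t)^{-1}$ one pairs the $\xi^2$ with $\|\nabla_{\hor}u_0\|^2$, which is exactly why the right-hand sides of \eqref{estimate nabla hor u only L2}--\eqref{estimate partial t u only L2} carry $\nabla_{\hor}u_0$ rather than $u_0$) and invoking Plancherel again on the data gives \eqref{estimate u only L2}--\eqref{estimate partial t u only L2}. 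For the $L^1$-improved estimates, I would use the Hausdorff–Young-type bound $\|\widehat{f}(\lambda)\|_{\mathrm{op}}\leq \|f\|_{L^1(\mathbf{H}_n)}$ to control the data coefficients uniformly, then estimate, e.g.,
\begin{align*}
\|u(t,\cdot)\|_{L^2(\mathbf{H}_n)}^2 \lesssim \int_{\mathbb{R}\setminus\{0\}} \sum_{\mu} |K_0(t,\xi_\mu)|^2\,\|\widehat{u_0}(\lambda)\|_{\mathrm{op}}^2\cdots\, |\lambda|^n\,\mathrm{d}\lambda + (\text{large-}\xi\text{ exponential part}),
\end{align*}
and the point is that $\sum_\mu |K(t,|\lambda|(2|\mu|+n))|^2$ followed by $\int |\lambda|^n\,\mathrm{d}\lambda$ behaves, after the change of variables $s=|\lambda|$, like the Euclidean integral $\int_{\mathbb{R}^{\mathcal{Q}}} |K(t,|\zeta|^2)|^2\,\mathrm{d}\zeta$ in homogeneous dimension $\mathcal{Q}=2n+2$ — which yields exactly the polynomial rate $(1+t)^{-\mathcal{Q}/2}$ for $\|K_1\|$, and one extra power of $t^{-1/2}$ (resp. $t^{-1}$) for each factor of $\xi$ (resp. $\xi^2$) in the multiplier, using $\int_0^\infty r^{\mathcal{Q}-1+2k}e^{-cr^2 t}\,\mathrm{d}r \lesssim t^{-(\mathcal{Q}+2k)/2}$. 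The main obstacle I expect is bookkeeping the mixed $L^1\cap L^2$ estimate correctly: one must split the $\lambda$-integral (and the Hermite sum) into the region where the total eigenvalue $\xi^2=|\lambda|(2|\mu|+n)$ is $\lesssim 1$, where the $L^1$ bound on the data coefficients combined with the Gaussian-type multiplier and the $|\lambda|^n$ weight gives the polynomial rate, and the complementary region, where the exponential factor $e^{-t/2}$ dominates and one falls back on the $L^2$ norm of the data — and then check that the Hermite-sum-plus-Plancherel-integral genuinely reproduces the homogeneous-dimension exponent $\mathcal{Q}/4$ rather than $n/4$ or $(n+1)/4$. Verifying that the summation over $\mu$ does not lose this and interfaces cleanly with the known formulas for $\mathrm{d}\pi_\lambda$ and the Plancherel measure recalled in Section~\ref{Section group fourier transf} is where the real care is needed.
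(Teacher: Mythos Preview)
Your proposal is correct and follows essentially the same route as the paper: group Fourier transform, projection onto the Hermite basis to obtain the scalar ODE $v''+v'+\mu_k|\lambda|v=0$, explicit multipliers from the characteristic roots, a low/high splitting in the variable $\xi^2=\mu_k|\lambda|$, exponential decay on the high part via Plancherel, and the Riemann--Lebesgue bound $\|\widehat{f}(\lambda)\|_{\mathrm{op}}\le\|f\|_{L^1}$ on the low part followed by the change of variables and the convergent series $\sum_k\mu_k^{-(n+1)}$ that produces the $\mathcal{Q}/4$ exponent.

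The one noteworthy difference is your treatment of $\nabla_{\hor}u$: you use directly that $\|\nabla_{\hor}u\|_{L^2}^2$ equals (via Plancherel and $\mathrm{d}\pi_\lambda(-\Delta_{\hor})=|\lambda|\mathrm{H}_w$) the weighted sum $c_n\int\sum_{k,\ell}\mu_k|\lambda|\,|\widehat{u}(t,\lambda)_{k,\ell}|^2|\lambda|^n\,\mathrm{d}\lambda$, so the gradient estimate reduces to multiplier bounds on $\xi K_0,\xi K_1$. The paper instead computes $\|X_ju\|_{L^2}^2$ and $\|Y_ju\|_{L^2}^2$ separately using the explicit shift formulas \eqref{e k derivative}--\eqref{e k multiplication}, which introduces terms $\widehat{u}(t,\lambda)_{k\pm\epsilon_j,\ell}$ with eigenvalues $\mu_{k\pm\epsilon_j}$ and requires some extra bookkeeping (different splitting thresholds, the convergence of $\sum_k(k\cdot\epsilon_j+1)\mu_{k-\epsilon_j}^{-(n+2)}$, etc.). Your route is cleaner here and avoids that detour; the paper's route has the minor advantage of giving the $X_j$ and $Y_j$ estimates individually, but for the stated theorem your shortcut is entirely sufficient.
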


\begin{remark} Let us point out that the estimates in the statement of Theorem \ref{Main Thm} correspond exactly to the $L^2(\mathbb{R}^n)$ - $L^2(\mathbb{R}^n)$ estimates with eventual additional $L^1(\mathbb{R}^n)$ - regularity for the data in the Euclidean case proved in \cite[Lemma 2]{Mat76}, by replacing the dimension $n$ of $\mathbb{R}^n$ with the homogeneous dimension $\mathcal{Q}=2n+2$ of $\mathbf{H}_n$.
\end{remark}

The paper is organized as follows: in Section \ref{Section group fourier transf} and Section \ref{Section Hermit poly}  we will recall some properties of the group Fourier transform on the Heisenberg group and of the Hermite functions, which will play a fundamental role in the paper; then, in Section \ref{Section Proof Main Thm} Theorem \ref{Main Thm} is proved. 


\section{Group Fourier transform on the Heisenberg group} \label{Section group fourier transf}

In this section, we recall the properties of the \emph{group Fourier transform} on $\mathbf{H}_n$ that we will use in order to prove Theorem \ref{Main Thm}. In the following we follow the definitions and the notations of \cite[Chapters 1 and 6]{FR16}. For the general definition of group Fourier transform on locally compact group and its properties we refer to the classical works \cite{Dix77,Dix81,Tay86,CG89,Fol95,Kir04} and references therein.

Let us recall the following equivalent realization of \emph{Schr\"odinger representations} $\{\pi_\lambda\}_{\lambda\in \mathbb{R}^*}$ of $\mathbf{H}_n$ on $L^2(\mathbb{R}^n)$. For any $\lambda\in \mathbb{R}^*$ the mapping $\pi_\lambda$ is a strongly continuous unitary representation defined by
\begin{align*}
\pi_\lambda (x,y,\tau) \varphi (w) \doteq  \mathrm{e}^{i\lambda	\left(\tau +\frac{1}{2}x\cdot y\right)} \mathrm{e}^{i\sign(\lambda)\sqrt{|\lambda |}\, y\cdot w}\varphi\big(w+\sqrt{|\lambda |}x\big)
\end{align*} for any $(x,y,\tau)\in \mathbf{H}_n$, $\varphi\in L^2(\mathbb{R}^n)$ and $w\in \mathbb{R}^n$.

If $f\in L^1(\mathbf{H}_n)$, the group Fourier transform of $f$ is the bounded operator on $L^2(\mathbb{R}^n)$ defined as
\begin{align*}
\pi_\lambda(f) \equiv \widehat{f}(\lambda) \doteq \int_{
\mathbf{H}_n} f(\eta) \, \pi_\lambda(\eta)^* \, \mathrm{d}\eta
\end{align*} for any $\lambda\in \mathbb{R}^*$, that is,
\begin{align*}
(\widehat{f}(\lambda) \varphi_1,\varphi_2)_{L^2(\mathbb{R}^n)} = \int_{\mathbf{H}_n} f(\eta) \, (\pi_\lambda(\eta)^* \varphi_1,\varphi_2)_{L^2(\mathbb{R}^n)}\, \mathrm{d}\eta
\end{align*} for any $\varphi_1,\varphi_2\in L^2(\mathbb{R}^n)$, where $\pi_\lambda(\eta)^*=\pi_\lambda(\eta)^{-1}$ denotes the adjoint operators of the unitary operator $\pi_\lambda(\eta)$. As Schr\"{o}dinger representations are unitary, it follows immediately by the definition the inequality
\begin{equation}\label{Riemann Lebesgue inequality}
\| \widehat{f}(\lambda)\|_{\mathscr{L}(L^2(\mathbb{R}^n)\to L^2(\mathbb{R}^n))}\leq \| f \|_{L^1(\mathbf{H}_n)}
\end{equation} for any $\lambda\in \mathbb{R}^*$.

If $f\in L^2(\mathbf{H}_n)$, then, the following \emph{Plancherel formula} holds
\begin{equation} \label{Plancherel formula}
\| f\|_{L^2(\mathbf{H}_n)}^2 = c_n \int_{\mathbb{R}^*} \| \widehat{f}(\lambda)\|_{\HS [L^2(\mathbb{R}^n)]}^2 \, |\lambda|^n \, \mathrm{d}\lambda,
\end{equation} where $c_n\doteq (2\pi)^{-(3n+1)}$ and $ \| \widehat{f}(\lambda)\|_{\HS [L^2(\mathbb{R}^n)]}$ denotes the Hilbert-Schmidt norm of $\widehat{f}(\lambda)$ (cf. \cite[Proposition 6.2.7]{FR16}). Therefore, for an arbitrary maximal orthonormal system $\{\varphi_k\}_{k\in \mathbb{N}}$ of $L^2(\mathbb{R}^n)$ the definition of Hilbert-Schmidt norm
\begin{align}
 \| \widehat{f}(\lambda)\|_{\HS [L^2(\mathbb{R}^n)]}^2 & \doteq \trace \left((\pi_\lambda(f))^*\,  \pi_\lambda(f)\right)  \notag \\ & = \sum_{k\in \mathbb{N}} \| \widehat{f} (\lambda)\varphi_k \|_{L^2(\mathbb{R}^n)}^2 = \sum_{k,\ell \in \mathbb{N}} \big|( \widehat{f}(\lambda) \varphi_k,\varphi_\ell)_{L^2(\mathbb{R}^n)}\big|^2 \label{Hilbert Schmidt norm}
\end{align} allows us to write \eqref{Plancherel formula} in more operative way, as follows:
\begin{align*} 
\| f\|_{L^2(\mathbf{H}_n)}^2 & = c_n \int_{\mathbb{R}^*} \sum_{k\in \mathbb{N}} \| \widehat{f} (\lambda)\varphi_k \|_{L^2(\mathbb{R}^n)}^2 \, |\lambda|^n \, \mathrm{d}\lambda  = c_n \int_{\mathbb{R}^*} \sum_{k,\ell \in \mathbb{N}} \big|( \widehat{f}(\lambda) \varphi_k,\varphi_\ell)_{L^2(\mathbb{R}^n)}\big|^2 \, |\lambda|^n \, \mathrm{d}\lambda.
\end{align*}
In particular, the Plancherel measure $\mu$ on $\widehat{\mathbf{H}}_n$ is supported on the equivalence classes of $\pi_\lambda$, $\lambda\in\mathbb{R}^*$ and $\mathrm{d}\mu(\pi_\lambda)= c_n |\lambda|^n \mathrm{d}\lambda$.

A further property related to the group Fourier transform, that we will apply extensively, is the action of the \emph{infinitesimal representation} of $\pi_\lambda$ on the generators of the first layer of the Lie algebra $\mathfrak{h}_n$, namely,
\begin{align}
\mathrm{d}\pi_\lambda (X_j) & = \sqrt{|\lambda|} \partial_{w_j} \label{pi lambda Xj} \quad \qquad \quad \, \mbox{for} \ \ j=1,\cdots, n, \\
\mathrm{d}\pi_\lambda (Y_j) & = i\sign(\lambda) \sqrt{|\lambda|} w_j \label{pi lambda Yj} \quad \mbox{for} \ \ j=1,\cdots, n.
\end{align}
In particular, since the action of $\mathrm{d}\pi_\lambda$ can be extended to the universal enveloping algebra of $\mathfrak{h}_n$, combining \eqref{pi lambda Xj} and \eqref{pi lambda Yj}, it follows
\begin{align} \label{pi lambda Delta hor}
\mathrm{d}\pi_\lambda (\Delta_{\hor})= |\lambda| \sum_{j=1}^n (\partial_{w_j}^2-w_j^2) = - |\lambda| \mathrm{H}_w,
\end{align} where $\mathrm{H}_w\doteq  -\Delta_w+|w|^2$ is the \emph{harmonic oscillator} on $\mathbb{R}^n$.

\section{Hermite functions and their properties} \label{Section Hermit poly}

In the last section, we recalled some properties of Schr\"odinger representations. Because of \eqref{pi lambda Delta hor} it will be helpful for what follows to recall the definition of the \emph{Hermite functions} and to prove some elementary properties of them. These functions constitute a complete system of eigenfunctions for the essentially self adjoint operator $\mathrm{H}_w$. 
Let us begin with the definition of $m$-th \emph{Hermite polynomial}
\begin{align*}
H_m(x)\doteq \mathrm{e}^{x^2}\left(\frac{\mathrm{d}}{\mathrm{d} x}\right)^m \mathrm{e}^{-x^2}, \qquad x\in \mathbb{R}.
\end{align*} It is well-known that Hermite polynomials satisfy the the following recurrence properties
\begin{align}
H_{m+1}(x) & =2x H_m(x)-2mH_{m-1}(x) \label{Hermite poly multiplication}, \\
H_{m}'(x) & = 2mH_{m-1}(x) \label{Hermite poly derivative}
\end{align} for any $m\in \mathbb{N}$ and $x\in \mathbb{R}$. Furthermore,
\begin{align*}
\int_\mathbb{R} H_m(x) H_\ell(x) \mathrm{e}^{-x^2}\,\mathrm{d} x = \sqrt{\pi}\,  2^m m! \, \delta_{m,\ell} \qquad \mbox{for any} \  m,\ell \in \mathbb{N}.
\end{align*} The $m$-th Hermite function is
\begin{align*}
\psi_m(x) \doteq a_m \, \mathrm{e}^{-\frac{x^2}{2}} H_m(x)
\end{align*} for any $m\in \mathbb{N}$ and $x\in\mathbb{R}$, where $a_m\doteq (\sqrt{\pi}\,  2^m m!)^{-1/2}$. By using \eqref{Hermite poly multiplication} and \eqref{Hermite poly derivative} it follows easily that $\psi_m$ is an eigenfunction for the 1-d harmonic oscillator $-\frac{\mathrm{d}^2}{\mathrm{d}x^2}+x^2$ relative to the eigenvalue $2m+1$ as
\begin{align}
-\psi''_m(x)+x^2 \psi_m(x)=(2m+1) \psi_m(x) \label{psi m is eigenfunction}.
\end{align} The multidimensional version of Hermite functions is given by
\begin{align} \label{def e k}
e_{k}(w) \doteq \prod_{j=1}^n \psi_{k_j}(w_j) = \prod_{j=1}^n a_{k_j} H_{k_j}(w_j) \, \mathrm{e}^{-w_j^2/2}
\end{align} for any multi-index $k=(k_j)_{1\leq j\leq n}\in \mathbb{N}^n$ and any $w=(w_j)_{1\leq j\leq n}\in \mathbb{R}^n$. As the elements of $\{e_k\}_{k\in \mathbb{N}^n}$ are functions with separate variables, 
by \eqref{psi m is eigenfunction} we get that
\begin{align}\label{e k is eigen function Hw}
\mathrm{H}_w e_k(w) = (-\Delta_w+|w|^2) e_k(w) = \bigg(\sum_{j=1}^n (2k_j+1) \bigg) e_k(w) = \underbrace{(2|k|+n)}_{\doteq \mu_k }e_k(w)
\end{align} for any $k\in \mathbb{N}^n$ and any $w\in \mathbb{R}^n$. 

Let us recall the following result (cf. \cite[Theorem 2.2.3]{NR10}).
\begin{proposition} The system $\{e_k\}_{k\in \mathbb{N}^n}$ is an orthonormal basis of $L^2(\mathbb{R}^n)$.
\end{proposition}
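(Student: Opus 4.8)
The plan is to verify orthonormality by a direct computation and then establish completeness, reducing the $n$-dimensional statement to the one-dimensional one by a tensorization argument.

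\emph{Orthonormality.} In one dimension, inserting the definition $\psi_m = a_m\mathrm{e}^{-x^2/2}H_m$ and using the stated orthogonality relation $\int_\mathbb{R} H_m(x)H_\ell(x)\mathrm{e}^{-x^2}\,\mathrm{d}x = \sqrt{\pi}\,2^m m!\,\delta_{m,\ell}$ together with the normalization $a_m=(\sqrt{\pi}\,2^m m!)^{-1/2}$ yields $(\psi_m,\psi_\ell)_{L^2(\mathbb{R})}=\delta_{m,\ell}$. Since $e_k(w)=\prod_{j=1}^n\psi_{k_j}(w_j)$ is a product of functions of separate variables, Fubini's theorem gives $(e_k,e_\ell)_{L^2(\mathbb{R}^n)}=\prod_{j=1}^n(\psi_{k_j},\psi_{\ell_j})_{L^2(\mathbb{R})}=\prod_{j=1}^n\delta_{k_j,\ell_j}=\delta_{k,\ell}$, so $\{e_k\}_{k\in\mathbb{N}^n}$ is orthonormal.

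\emph{Completeness.} Because $L^2(\mathbb{R}^n)\cong L^2(\mathbb{R})^{\otimes n}$ and $e_k=\psi_{k_1}\otimes\cdots\otimes\psi_{k_n}$, it suffices to prove that $\{\psi_m\}_{m\in\mathbb{N}}$ is total in $L^2(\mathbb{R})$. Suppose $f\in L^2(\mathbb{R})$ satisfies $(f,\psi_m)_{L^2(\mathbb{R})}=0$ for every $m$. Since $H_m$ has degree $m$ with nonzero leading coefficient (immediate from \eqref{Hermite poly multiplication} by induction), the families $\{H_0,\dots,H_m\}$ and $\{1,x,\dots,x^m\}$ span the same space of polynomials, so the hypothesis is equivalent to $\int_\mathbb{R} g(x)\,x^m\,\mathrm{d}x=0$ for all $m$, where $g(x)\doteq f(x)\,\mathrm{e}^{-x^2/2}$. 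By Cauchy--Schwarz the function $x\mapsto g(x)\,\mathrm{e}^{c|x|}$ lies in $L^1(\mathbb{R})$ for every $c>0$; hence the Fourier transform $\widehat{g}(\xi)=\int_\mathbb{R} g(x)\mathrm{e}^{-ix\xi}\,\mathrm{d}x$ extends to an entire function on $\mathbb{C}$, and differentiating under the integral sign gives $\widehat{g}^{(m)}(0)=(-i)^m\int_\mathbb{R} g(x)\,x^m\,\mathrm{d}x=0$ for all $m$. An entire function all of whose derivatives vanish at the origin is identically zero, so $\widehat{g}\equiv 0$, whence $g=0$ a.e.\ and thus $f=0$ a.e. Therefore $\{\psi_m\}_{m\in\mathbb{N}}$ is an orthonormal basis of $L^2(\mathbb{R})$, and tensorizing yields the claim for $\{e_k\}_{k\in\mathbb{N}^n}$ in $L^2(\mathbb{R}^n)$.

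The main obstacle is precisely this completeness step: one must produce a quantitative decay estimate on $g=f\mathrm{e}^{-x^2/2}$ strong enough to ensure $\widehat{g}$ is entire, and then apply the identity theorem. An alternative that avoids complex analysis uses the generating identity $\sum_{m\geq 0}H_m(x)\,t^m/m!=\mathrm{e}^{2xt-t^2}$: one checks that the weighted generating series of the coefficients $(f,\psi_m)_{L^2(\mathbb{R})}$ equals, up to an explicit Gaussian factor, the value of the Fourier transform of $g$ at an imaginary argument, whose vanishing again forces $f=0$ by uniqueness of the Fourier transform. Since the statement is entirely classical, one may also simply invoke \cite[Theorem 2.2.3]{NR10}.
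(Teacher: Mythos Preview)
Your argument is correct. The orthonormality computation is straightforward, and the completeness step via the analyticity of $\widehat{g}$ is a standard and valid route: the bound $g(x)\mathrm{e}^{c|x|}\in L^1(\mathbb{R})$ follows from Cauchy--Schwarz since $f\in L^2(\mathbb{R})$ and $\mathrm{e}^{-x^2/2+c|x|}\in L^2(\mathbb{R})$, and this indeed makes $\widehat{g}$ entire, after which the identity theorem finishes the job.

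That said, the paper does not actually prove this proposition: it is stated with a bare citation to \cite[Theorem 2.2.3]{NR10} and no argument is given. So your approach is not so much ``different'' from the paper's as it is simply \emph{more}: you supply a self-contained proof where the paper outsources the result to the literature---and you even point to the same reference at the end. Your write-up would stand perfectly well as the missing proof.
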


Having in mind \eqref{pi lambda Xj} and \eqref{pi lambda Yj}, we calculate now the action of multiplication and derivation operators on the elements of $\{e_k\}_{k\in \mathbb{N}^n}$.

\begin{lemma} Let $k\in \mathbb{N}^n$ and $1\leq j\leq n$.  Then,
\begin{align}
\partial_{w_j} e_k (w) & =\frac{1}{\sqrt{2}} \left(\sqrt{k\cdot \epsilon_j} \, \, e_{k-\epsilon_j}(w)- e_{k+\epsilon_j}(w)\right) \label{e k derivative} \\
w_j \, e_k(w) & =\frac{1}{\sqrt{2}} \left(\sqrt{k\cdot \epsilon_j} \, \,  e_{k-\epsilon_j}(w)+ e_{k+\epsilon_j}(w)\right) \label{e k multiplication}
\end{align}
for any $w\in \mathbb{R}^n$, where $\epsilon_j=(\delta_{j,h})_{1\leq h\leq n}$ denotes the $j$-th element of the standard basis of $\mathbb{R}^n$.
\end{lemma}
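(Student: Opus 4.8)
The plan is to reduce the $n$-dimensional identities to one-dimensional ones for the functions $\psi_m$, and then to read them off from the Hermite recurrences \eqref{Hermite poly multiplication} and \eqref{Hermite poly derivative}. By \eqref{def e k} the function $e_k$ has separated variables, $e_k(w)=\prod_{\ell=1}^{n}\psi_{k_\ell}(w_\ell)$, and each of the operators $\partial_{w_j}$ and multiplication by $w_j$ acts only on the $j$-th factor $\psi_{k_j}(w_j)$, leaving the remaining $n-1$ factors untouched. Since $e_{k\pm\epsilon_j}$ is, by definition, obtained from $e_k$ precisely by replacing that $j$-th factor with $\psi_{k_j\pm1}(w_j)$, and $k\cdot\epsilon_j=k_j$, it suffices to express $\psi_m'$ and $x\,\psi_m$ as linear combinations of $\psi_{m-1}$ and $\psi_{m+1}$ (with the convention $\psi_{-1}\equiv 0$), with exactly the coefficients displayed in \eqref{e k derivative} and \eqref{e k multiplication}; multiplying each such scalar identity through by $\prod_{\ell\neq j}\psi_{k_\ell}(w_\ell)$ then yields the two claimed formulas.

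For the scalar step I would compute directly from $\psi_m(x)=a_m\,\mathrm{e}^{-x^2/2}H_m(x)$, recalling $a_m=(\sqrt{\pi}\,2^m m!)^{-1/2}$. Differentiating gives $\psi_m'(x)=a_m\,\mathrm{e}^{-x^2/2}\bigl(H_m'(x)-x\,H_m(x)\bigr)$; then I would substitute $H_m'=2m H_{m-1}$ from \eqref{Hermite poly derivative} together with the rearrangement $x H_m=\tfrac{1}{2}H_{m+1}+m H_{m-1}$ of \eqref{Hermite poly multiplication}, so that $\psi_m'$ is expressed through $\mathrm{e}^{-x^2/2}H_{m-1}$ and $\mathrm{e}^{-x^2/2}H_{m+1}$. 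The same rearrangement of \eqref{Hermite poly multiplication} treats $x\,\psi_m(x)$ at once. It then remains only to convert $a_m\,\mathrm{e}^{-x^2/2}H_{m\pm1}$ back into $\psi_{m\pm1}$, i.e.\ to insert the ratios $a_m/a_{m\pm1}$, which are immediate from the closed form of $a_m$; collecting the coefficients produces the one-dimensional identities and hence \eqref{e k derivative} and \eqref{e k multiplication}.

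I do not anticipate a genuine obstacle here: the proof is a short bookkeeping exercise with the constants $a_m$ and the two recurrences. The only point worth a word of care is the boundary case $m=0$, equivalently $k_j=0$, where $\psi_{m-1}$ (respectively $e_{k-\epsilon_j}$) is undefined but appears multiplied by the coefficient $\sqrt{m}=\sqrt{k_j}=0$, so the stated formulas remain correct under the convention that such a term is read as $0$. As an alternative one could phrase the whole computation through the ladder operators $\tfrac{1}{\sqrt{2}}(x\mp\partial_x)$, for which the $\psi_m$ form the standard chain of eigenfunctions, but the direct route via \eqref{Hermite poly multiplication} and \eqref{Hermite poly derivative} is the most self-contained given what has already been recalled.
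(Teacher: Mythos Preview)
Your approach is essentially identical to the paper's: both reduce to the one-dimensional case via the tensor-product structure \eqref{def e k}, apply the recurrences \eqref{Hermite poly multiplication} and \eqref{Hermite poly derivative} to rewrite $\psi_m'$ and $x\psi_m$, and then insert the ratios $a_m/a_{m\pm1}$ to recover $\psi_{m\pm1}$. Your remark on the boundary case $k_j=0$ and the optional ladder-operator reformulation are welcome additions not made explicit in the paper, but the core argument is the same.
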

\begin{proof}
Let us begin with \eqref{e k derivative}. By the definition \eqref{def e k} of multidimensional Hermite function it follows
\begin{align*}
\partial_{w_j} e_k (w) & =  \partial_{w_j} \left( a_{k_j} H_{k_j}(w_j) \, \mathrm{e}^{-w_j^2/2}\right) \prod_{\substack{h = 1\\  h \not = j}}^n a_{k_h} H_{k_h}(w_h) \, \mathrm{e}^{-w_h^2/2}.  
\end{align*} Applying \eqref{Hermite poly multiplication} and \eqref{Hermite poly derivative}, we find 
\begin{align*}
\partial_{w_j} \Big( a_{k_j} H_{k_j}(w_j) \, \mathrm{e}^{-w_j^2/2}\Big) & = a_{k_j} \Big( H'_{k_j}(w_j) -w_j \, H_{k_j}(w_j) \Big)  \mathrm{e}^{-w_j^2/2} \\
& = a_{k_j} \Big( k_j H_{k_j-1}(w_j) -\tfrac{1}{2} \, H_{k_j+1}(w_j)\Big)  \mathrm{e}^{-w_j^2/2} .
\end{align*} Therefore,
\begin{align*}
\partial_{w_j} e_k (w) & =  a_{k_j} \Big( k_j H_{k_j-1}(w_j) -\tfrac{1}{2} \, H_{k_j+1}(w_j)\Big)  \mathrm{e}^{-w_j^2/2}  \prod_{\substack{h=1\\  h\neq j}}^n a_{k_h} H_{k_h}(w_h) \, \mathrm{e}^{-w_h^2/2}  \\
& =  \frac{a_{k_j}k_j}{a_{k_j-1}}  \Big(a_{k_j-1} H_{k_j-1}(w_j) \mathrm{e}^{-w_j^2/2} \Big) \prod_{\substack{h=1\\  h\neq j}}^n a_{k_h} H_{k_h}(w_h) \, \mathrm{e}^{-w_h^2/2}  \\ & \quad -  \frac{a_{k_j}}{2 a_{k_j+1}} \Big(  a_{k_j +1} H_{k_j+1}(w_j) \mathrm{e}^{-w_j^2/2} \Big) \prod_{\substack{h=1\\  h\neq j}}^n a_{k_h} H_{k_h}(w_h) \, \mathrm{e}^{-w_h^2/2}  = \frac{a_{k_j}k_j}{a_{k_j-1}}    e_{k-\epsilon_j}(w)-  \frac{a_{k_j}}{2 a_{k_j+1}} e_{k+\epsilon_j}.
\end{align*} Since $a_{k_j}k_j/a_{k_j-1}= \sqrt{k_j/2}$ and $a_{k_j}/2 a_{k_j+1}= 1/\sqrt{2}$, the last equality yields \eqref{e k derivative}. Similarly, by
\begin{align*}
w_j  H_{k_j}(w_j) \, \mathrm{e}^{-w_j^2/2} =   \Big( k_j H_{k_j-1}(w_j) +\tfrac{1}{2} \, H_{k_j+1}(w_j)\Big)  \mathrm{e}^{-w_j^2/2}
\end{align*} we get \eqref{e k multiplication}. The proof is complete.
\end{proof}

\section{Proof of Theorem \ref{Main Thm}} \label{Section Proof Main Thm}

In this section, we will prove Theorem \ref{Main Thm}. We split the proof in three parts for the estimate of the $L^2(\mathbf{H}_n)$ - norm of $u(t,\cdot), \partial_t u(t,\cdot)$ and $\nabla_{\hor}u(t,\cdot)$, respectively. 

The plan is to adapt the main ideas of WKB analysis from the pioneering work of Matsumura \cite{Mat76} to the case of the Heisenberg group. Since the group Fourier transform of a $L^2(\mathbf{H}_n)$ function $f$ is no longer a function but a family of bounded linear operators $\{\widehat{f}(\lambda)\}_{\lambda\in\mathbb{R}^*}$ on $L^2(\mathbb{R}^n)$, the trick is to project somehow these  operators by using the basis $\{e_k\}_{k\in\mathbb{N}^n}$, namely, by working with $$\big\{(\widehat{f}(\lambda) e_k,e_\ell)_{L^2(\mathbb{R}^n)}\big\}_{k,\ell\in \mathbb{N}^n}.$$ 

This approach has been introduced in \cite{RT18}. However, differently from \cite[Section 2]{RT18}, where the combined presence of a mass and a damping term produces an exponential decay  for the $L^2(\mathbf{H}_n)$ - norm of the solution and of all its derivatives, in the massless case for each component we will split the corresponding integral term coming from Plancherel formula in two zones with respect to $\lambda$. On the one hand, for $|\lambda|$  \textquotedblleft small\textquotedblright $\,$ the analogous of Riemann-Lebesgue inequality in \eqref{Riemann Lebesgue inequality} is employed in order to get the decay rates of polynomial order as in \eqref{estimate u}, \eqref{estimate nabla hor u} and \eqref{estimate partial t u}.   On the other hand, for $|\lambda|$  \textquotedblleft large\textquotedblright $\, $ by Plancherel formula we get exponential decay under suitable regularity assumptions for the Cauchy data, provided that suitable $L^2$ regularity is required for the Cauchy data. Let us stress that this splitting depends on the multi-index $k\in\mathbb{N}^n$ and the corresponding eigenvalue $\mu_k$ of $\mathrm{H}_w$. In this way, we will show the validity of \eqref{estimate u}, \eqref{estimate nabla hor u} and \eqref{estimate partial t u} first. As byproduct of the above described procedure we get immediately estimates \eqref{estimate u only L2}, \eqref{estimate nabla hor u only L2} and \eqref{estimate partial t u only L2} too, if we use Plancherel formula for  \textquotedblleft small\textquotedblright $\,$ $|\lambda|$ as well, rather than the Riemann-Lebesgue inequality.

\subsection{Estimate of the $L^2(\mathbf{H}_n)$ - norm of $u(t,\cdot)$}

Let us consider $u$ solution to \eqref{damped wave equation Heisenberg}. By acting by the group Fourier transform on \eqref{damped wave equation Heisenberg} we get a Cauchy problem related to a parameter dependent functional differential equation for $\widehat{u}(t,\lambda)$
\begin{align}\label{damped wave equation Heisenberg transform}
\begin{cases}
\partial_t^2 \widehat{u}(t,\lambda)+\partial_t \widehat{u}(t,\lambda)-\sigma_{\Delta_{\hor}}(\lambda) \, \widehat{u}(t,\lambda)= 0, & \lambda\in \mathbb{R}^*, \ t>0, \\
\widehat{u}(0,\lambda)= \widehat{u}_0(\lambda), & \lambda \in \mathbb{R}^*, \\
\partial_t \widehat{u}(0,\lambda)= \widehat{u}_1(\lambda), & \lambda\in \mathbb{R}^*,
\end{cases}
\end{align} where $\sigma_{\Delta_{\hor}}(\lambda) $ is the symbol of the sub-Laplacian. Due to \eqref{pi lambda Delta hor}, we have $\sigma_{\Delta_{\hor}}(\lambda)= -|\lambda| \mathrm{H}_w$. Let us introduce the notation
\begin{align}\label{def u hat projections}
\widehat{u}(t,\lambda)_{k,\ell}\doteq \big(\widehat{u}(t,\lambda) e_k,e_\ell\big)_{L^2(\mathbb{R}^n)} \qquad \mbox{for any} \ \ k,\ell\in \mathbb{N}^n,
\end{align} where $\{e_k\}_{k\in \mathbb{N}^n}$ is the system of Hermite functions.
 Since $\mathrm{H}_w e_k= \mu_k e_k$, then, $\widehat{u}(t,\lambda)_{k,\ell}$ solves an ordinary differential equation depending on parameters $\lambda\in\mathbb{R}^*$ and $k,\ell\in \mathbb{N}^n$
\begin{align}\label{damped wave equation Heisenberg project}
\begin{cases}
\partial_t^2 \widehat{u}(t,\lambda)_{k,\ell}+\partial_t \widehat{u}(t,\lambda)_{k,\ell}+\mu_k |\lambda| \widehat{u}(t,\lambda)_{k,\ell}= 0, &   t>0, \\
\widehat{u}(0,\lambda)_{k,\ell}= \widehat{u}_0(\lambda)_{k,\ell},  \\
\partial_t \widehat{u}(0,\lambda)_{k,\ell}= \widehat{u}_1(\lambda)_{k,\ell},
\end{cases}
\end{align} where $$\widehat{u}_h(\lambda)_{k,\ell}\doteq \big(\widehat{u}_h(\lambda) e_k,e_\ell\big)_{L^2(\mathbb{R}^n)} \qquad \mbox{for any} \ h=0,1 \ \mbox{and any} \ k,\ell\in \mathbb{N}^n.$$ The roots of the characteristic equation $\tau^2+\tau+\mu_k|\lambda|=0$  are
\begin{align*}
\tau_{\pm} =\begin{cases} -\frac{1}{2}\pm i \sqrt{\mu_k |\lambda|-\frac{1}{4}} & \mbox{if} \ \  4\mu_k |\lambda| >1 , \\
-\frac{1}{2} & \mbox{if} \ \  4\mu_k |\lambda| =1 , \\
-\frac{1}{2}\pm  \sqrt{\frac{1}{4}-\mu_k |\lambda|} & \mbox{if} \ \  4\mu_k |\lambda| <1 . 
\end{cases} 
\end{align*} Elementary computations lead to the representation formula
\begin{align} \label{representation u hat components}
\widehat{u}(t,\lambda)_{k,\ell} = \widehat{u}_0(\lambda)_{k,\ell}\, \mathrm{e}^{-\frac{t}{2}} F(t, \lambda,k)+\left(\widehat{u}_0(\lambda)_{k,\ell}+\tfrac{1}{2}\widehat{u}_1(\lambda)_{k,\ell}\right)\mathrm{e}^{-\frac{t}{2}} G(t, \lambda,k),
\end{align} where 
\begin{align*}
F(t, \lambda,k) &\doteq  \begin{cases} \cos\left( \sqrt{\mu_k |\lambda|-\frac{1}{4}} \, t\right) & \mbox{if} \ \  4\mu_k |\lambda| >1 , \\
1 & \mbox{if} \ \  4\mu_k |\lambda| =1 , \\
\cosh\left( \sqrt{\frac{1}{4}-\mu_k |\lambda|} \, t\right)  & \mbox{if} \ \  4\mu_k |\lambda| <1 ,
\end{cases}  \\
G(t, \lambda,k) &\doteq \begin{cases}\dfrac{ \sin\left( \sqrt{\mu_k |\lambda|-\frac{1}{4}} \, t\right)}{\sqrt{\mu_k |\lambda|-\frac{1}{4}}} & \mbox{if} \ \  4\mu_k |\lambda| >1 , \\
t & \mbox{if} \ \  4\mu_k |\lambda| =1 , \\
\dfrac{\sinh\left( \sqrt{\frac{1}{4}-\mu_k |\lambda|} \, t\right) }{ \sqrt{\frac{1}{4}-\mu_k |\lambda|}} & \mbox{if} \ \  4\mu_k |\lambda| <1 .
\end{cases}
\end{align*} Note that $F(t,\lambda,k)=\partial _t G(t, \lambda,k)$ for any $\lambda\in \mathbb{R}^*$. 
By using Plancherel formula and $\{e_k\}_{k\in\mathbb{N}^n}$ as orthonormal basis of $L^2(\mathbb{R}^n)$, we have 
\begin{align*}
\| u(t,\cdot) & \|_{L^2(\mathbf{H}_n)}^2  \\ & = c_n \int_{\mathbb{R}^*}\| \widehat{u}(t,\lambda)\|^2_{\HS [L^2(\mathbb{R}^n)]} \, |\lambda|^n \,\mathrm{d}\lambda = c_n \sum_{k,\ell\in\mathbb{N}^n} \int_{\mathbb{R}^*}\big( \widehat{u}(t,\lambda) e_k,e_\ell\big)^2_{L^2(\mathbb{R}^n)}  |\lambda|^n \,\mathrm{d}\lambda \\
& = c_n \sum_{k,\ell \in\mathbb{N}^n} \left( \int_{0<|\lambda|<\frac{1}{8\mu_k}} \big|\big( \widehat{u}(t,\lambda) e_k,e_\ell\big)_{L^2(\mathbb{R}^n)}\big|^2  |\lambda|^n \,\mathrm{d}\lambda + \int_{|\lambda| >\frac{1}{8\mu_k}} \big|\big( \widehat{u}(t,\lambda) e_k,e_\ell\big)_{L^2(\mathbb{R}^n)}\big|^2 |\lambda|^n \,\mathrm{d}\lambda \right)  \\ & \doteq I^{\mathrm{low}}+ I^{\mathrm{high}}.
\end{align*} We will estimate separately the terms $I^{\mathrm{low}}$ and $ I^{\mathrm{high}}$. We start with
\begin{align*}
 I^{\mathrm{high}} =  c_n \sum_{k,\ell\in\mathbb{N}^n} \int_{|\lambda| >\frac{1}{8\mu_k}} | \widehat{u}(t,\lambda)_{k,\ell}|^2\, |\lambda|^n \,\mathrm{d}\lambda.
\end{align*} For any $\lambda$ such that $|\lambda| > 1/ (8\mu_k)$ we may estimate
\begin{align*}
\mathrm{e}^{-\frac{t}{2}} |F(t,\lambda,k) |,  \mathrm{e}^{-\frac{t}{2}}|G(t,\lambda,k)| \lesssim \mathrm{e}^{-\frac{\delta}{2}t}
\end{align*} for some $\delta>0$, where $\delta$ and the unexpressed multiplicative constant hereafter are independent of the time variable and of the parameters $\lambda$ and $k,\ell$ as well. Then, by the representation formula \eqref{representation u hat components} we obtain 
\begin{align}\label{estimate u hat k,l}
 |\widehat{u}(t,\lambda)_{k,\ell}|^2 \lesssim  \mathrm{e}^{-\delta t} \left(|\widehat{u}_0(\lambda)_{k,\ell}|^2+|\widehat{u}_1(\lambda)_{k,\ell}|^2\right) 
\end{align} for any $|\lambda| >1/(8\mu_k)$. Therefore,
\begin{align}
 I^{\mathrm{high}} & \lesssim   \mathrm{e}^{-\delta t}  \sum_{k,\ell\in\mathbb{N}^n} \int_{|\lambda| >\frac{1}{8\mu_k}}\left(|\widehat{u}_0(\lambda)_{k,\ell}|^2+|\widehat{u}_1(\lambda)_{k,\ell}|^2\right)   |\lambda|^n \,\mathrm{d}\lambda \notag \\
&  \lesssim   \mathrm{e}^{-\delta t}  \int_{\mathbb{R}^*} \sum_{k,\ell\in\mathbb{N}^n} \left(|\widehat{u}_0(\lambda)_{k,\ell}|^2+|\widehat{u}_1(\lambda)_{k,\ell}|^2\right)   |\lambda|^n \,\mathrm{d}\lambda \notag \\
& =  \mathrm{e}^{-\delta t}  \int_{\mathbb{R}^*}  \left(\|\widehat{u}_0(\lambda)\|^2_{\HS[L^2(\mathbb{R}^n)]}+\|\widehat{u}_1(\lambda)\|^2_{\HS[L^2(\mathbb{R}^n)]}\right)   |\lambda|^n \,\mathrm{d}\lambda \approx \mathrm{e}^{-\delta t} \Big(\|u_0\|_{L^2(\mathbf{H}_n)}^2+\|u_1\|_{L^2(\mathbf{H}_n)}^2\Big), \label{estimate I high}
\end{align} where in the last step we applied Plancherel formula to $u_0$ and $u_1$. 

\begin{remark}\label{remark u}  Let us point out that if we do not restrict to the range of $\lambda$ such that $|\lambda|>1/(8\mu_k)$, then one has to replace \eqref{estimate u hat k,l} by 
\begin{align*}
 |\widehat{u}(t,\lambda)_{k,\ell}|^2 \lesssim  \left(|\widehat{u}_0(\lambda)_{k,\ell}|^2+|\widehat{u}_1(\lambda)_{k,\ell}|^2\right) 
\end{align*} since $|F(t,\lambda,k)|,|G(t,\lambda,k)|\to 1$ as $|\lambda|\to 0$. So, we would end up with the uniform estimate 
\begin{align}\label{boundedness L2 norm u(t)}
\| u(t,\cdot)  \|_{L^2(\mathbf{H}_n)}^2 \lesssim \|u_0\|_{L^2(\mathbf{H}_n)}^2+\|u_1\|_{L^2(\mathbf{H}_n)}^2
\end{align} with no time-dependent decay function on the right-hand side. Clearly, \eqref{boundedness L2 norm u(t)} implies immediately \eqref{estimate u only L2} in the statement of Theorem \ref{Main Thm}.
\end{remark}

We estimate now the other term 
\begin{align*}
 I^{\mathrm{low}} =  c_n \sum_{k,\ell\in\mathbb{N}^n} \int_{0<|\lambda| <\frac{1}{8\mu_k}} | \widehat{u}(t,\lambda)_{k,\ell}|^2\, |\lambda|^n \,\mathrm{d}\lambda.
\end{align*} For $\lambda$ such that $|\lambda|<1/(8\mu_k)$ it holds
\begin{align*}
\mathrm{e}^{-\frac{t}{2}} |F(t,\lambda,k) | & = \mathrm{e}^{-\frac{t}{2}} \cosh\left( \sqrt{\tfrac{1}{4}-\mu_k |\lambda|} \, t\right) \lesssim \mathrm{e}^{\left(-\frac{1}{2}+\frac{1}{2}\sqrt{1-4\mu_k|\lambda|}\right)t}, 
\\  \mathrm{e}^{-\frac{t}{2}}|G(t,\lambda,k)| & =  \frac{ \mathrm{e}^{-\frac{t}{2}} \sinh\left( \sqrt{\frac{1}{4}-\mu_k |\lambda|} \, t\right) }{ \sqrt{\frac{1}{4}-\mu_k |\lambda|}}  \lesssim \mathrm{e}^{\left(-\frac{1}{2}+\frac{1}{2}\sqrt{1-4\mu_k|\lambda|}\right)t}.
\end{align*} Using the inequality 
\begin{align}\label{sqrt(1-4z) estimate}
-4z\leq -1+\sqrt{1-4z}\leq -2z \qquad \mbox{for any} \ z\geq 0,
\end{align} it follows that
\begin{align*}
 |\widehat{u}(t,\lambda)_{k,\ell}|^2 & \lesssim \mathrm{e}^{-t+\sqrt{1-4\mu_k|\lambda|}\, t} \left(|\widehat{u}_0(\lambda)_{k,\ell}|^2+|\widehat{u}_1(\lambda)_{k,\ell}|^2\right)  \\ &\lesssim \mathrm{e}^{- 2 \mu_k|\lambda| t} \left(|\widehat{u}_0(\lambda)_{k,\ell}|^2+|\widehat{u}_1(\lambda)_{k,\ell}|^2\right) , 
\end{align*}  for any $\lambda$ such that $|\lambda|<1/(8\mu_k)$. Applying the last estimate, we find
\begin{align*}
 I^{\mathrm{low}} & \lesssim \sum_{k,\ell\in\mathbb{N}^n} \int_{0<|\lambda| <\frac{1}{8\mu_k}} \mathrm{e}^{- 2 \mu_k|\lambda| t} \left(|\widehat{u}_0(\lambda)_{k,\ell}|^2+|\widehat{u}_1(\lambda)_{k,\ell}|^2\right)   |\lambda|^n \,\mathrm{d}\lambda \\
 & = \sum_{k\in\mathbb{N}^n} \int_{0<|\lambda| <\frac{1}{8\mu_k}} \mathrm{e}^{- 2 \mu_k|\lambda| t}  \left( \ \sum_{\ell\in\mathbb{N}^n} \big|\big(\widehat{u}_0(\lambda)e_k, e_\ell\big)_{L^2(\mathbb{R}^n)}\big|^2+\big|\big(\widehat{u}_1(\lambda)e_k, e_\ell\big)_{L^2(\mathbb{R}^n)}\big|^2\right)   |\lambda|^n \,\mathrm{d}\lambda \\
 & = \sum_{k\in\mathbb{N}^n} \int_{0<|\lambda| <\frac{1}{8\mu_k}} \mathrm{e}^{- 2 \mu_k|\lambda| t}  \left(\|\widehat{u}_0(\lambda)e_k\|_{L^2(\mathbb{R}^n)}^2+\|\widehat{u}_1(\lambda)e_k\|_{L^2(\mathbb{R}^n)}^2\right)   |\lambda|^n \,\mathrm{d}\lambda \\
 & \lesssim \sum_{k\in\mathbb{N}^n} \int_{0}^{\frac{1}{8\mu_k}} \mathrm{e}^{- 2 \mu_k \lambda t}    \lambda^n \,\mathrm{d}\lambda \   \Big(\|u_0\|_{L^1(\mathbf{H}_n)}^2+\|u_1\|_{L^1(\mathbf{H}_n)}^2\Big),
\end{align*} where in the third step we employed Parseval's identity
\begin{align}\label{Parseval's identity}
\|\widehat{u}_h(\lambda)e_k\|_{L^2(\mathbb{R}^n)}^2=\sum_{\ell\in\mathbb{N}^n} \big|\big(\widehat{u}_0(\lambda)e_k, e_\ell\big)_{L^2(\mathbb{R}^n)}\big|^2 \qquad \mbox{for} \ h=0,1
\end{align}
 and in the last step we used \eqref{Riemann Lebesgue inequality} and $\|e_k\|_{L^2(\mathbb{R}^n)}=1$ to get the inequality 
\begin{align} \label{estimate u hat e k}
\|\widehat{u}_h(\lambda)e_k\|_{L^2(\mathbb{R}^n)} & \leq \|\widehat{u}_h(\lambda)\|_{\mathscr{L}\left(L^2(\mathbb{R}^n)\to L^2(\mathbb{R}^n)\right)}\|e_k\|_{L^2(\mathbb{R}^n)} \leq \|u_h\|_{L^1(\mathbf{H}_n)} \quad \mbox{for} \ h=0,1.
\end{align} Carrying out the change of variables $\theta= 2\mu_k \lambda t$ in the last integral, we have
\begin{align}
 I^{\mathrm{low}} & \lesssim \sum_{k\in\mathbb{N}^n} (2\mu_k t)^{-(n+1)} \int_{0}^{\frac{t}{4}} \mathrm{e}^{- \theta}    \theta^n \,\mathrm{d}\theta \   \Big(\|u_0\|_{L^1(\mathbf{H}_n)}^2+\|u_1\|_{L^1(\mathbf{H}_n)}^2\Big) \notag \\
 & \lesssim  \Gamma(n+1)\,  t^{-(n+1)}\left(\,  \sum_{k\in\mathbb{N}^n} \mu_k ^{-(n+1)} \right)   \Big(\|u_0\|_{L^1(\mathbf{H}_n)}^2+\|u_1\|_{L^1(\mathbf{H}_n)}^2\Big)\notag  \\
 & \lesssim    t^{-\frac{\mathcal{Q}}{2}}  \Big(\|u_0\|_{L^1(\mathbf{H}_n)}^2+\|u_1\|_{L^1(\mathbf{H}_n)}^2\Big), \label{estimate I low}
\end{align} where $\Gamma$ denotes Euler integral of the second kind  and  in the last estimate we used that the series
 \begin{align}\label{series convergent}
  \sum_{k\in\mathbb{N}^n} \mu_k ^{-(n+1)}  = \sum_{k\in\mathbb{N}^n} (2|k|+n)^{-(n+1)}
 \end{align} is convergent.
Combining \eqref{estimate I high}, \eqref{estimate I low} and \eqref{boundedness L2 norm u(t)} (which allows us to exclude a singular behavior of $I^{\mathrm{low}} $ as $t\to 0^+$), we get finally \eqref{estimate u}.

\subsection{Estimate of the $L^2(\mathbf{H}_n)$ - norm of $\partial_t u(t,\cdot)$}

In this section we estimate the $L^2(\mathbf{H}_n)$ - norm of the time derivative of $u$. As in the case of  the $\|u(t,\cdot)\|_{L^2(\mathbf{H}_n)}$, we will split the estimate in two parts. By Plancherel formula we have
\begin{align*}
\| \partial_t u(t,\cdot)  \|_{L^2(\mathbf{H}_n)}^2  & = c_n \int_{\mathbb{R}^*}\| \partial_t \widehat{u}(t,\lambda)\|^2_{\HS [L^2(\mathbb{R}^n)]} \, |\lambda|^n \,\mathrm{d}\lambda  \\
& = c_n \sum_{k,\ell \in\mathbb{N}^n} \left( \int_{0<|\lambda|<\frac{1}{8\mu_k}}  + \int_{|\lambda| >\frac{1}{8\mu_k}}   \right) \big|\big( \partial_t\widehat{u}(t,\lambda) e_k,e_\ell\big)_{L^2(\mathbb{R}^n)}\big|^2 |\lambda|^n \,\mathrm{d}\lambda  \\
& = c_n \sum_{k,\ell \in\mathbb{N}^n} \left( \int_{0<|\lambda|<\frac{1}{8\mu_k}}  + \int_{|\lambda| >\frac{1}{8\mu_k}}   \right) \big| \partial_t\widehat{u}(t,\lambda)_{ k,\ell}\big|^2 |\lambda|^n \,\mathrm{d}\lambda \doteq J^{\mathrm{low}}+ J^{\mathrm{high}}.
\end{align*}

We estimate $J^{\mathrm{low}}$ first. For $|\lambda|<1/(8\mu_k)$ by \eqref{representation u hat components} we find
\begin{align}
\partial_t \widehat{u}(t,\lambda)_{ k,\ell} & = -\frac{ \mathrm{e}^{-\frac{t}{2}}\sinh \left(\sqrt{\frac{1}{4}-\mu_k|\lambda|} \, t\right) }{\sqrt{\frac{1}{4}-\mu_k|\lambda|}} \, \mu_k |\lambda|  \widehat{u}_0(\lambda)_{ k,\ell} \notag \\
& \quad + \mathrm{e}^{-\frac{t}{2}} \left(\cosh \left(\sqrt{\tfrac{1}{4}-\mu_k|\lambda|} \, t\right) -\frac{ \sinh \left(\sqrt{\frac{1}{4}-\mu_k|\lambda|} \, t\right)}{2\sqrt{\frac{1}{4}-\mu_k|\lambda|}} \right)  \widehat{u}_1(\lambda)_{ k,\ell}  \label{representation formula du/dt elliptic zone} 
\end{align} and, rewriting the factor that multiplies $\widehat{u}_1(\lambda)_{ k,\ell}$, we arrive at
\begin{align*}
 \partial_t \widehat{u}(t,\lambda)_{ k,\ell} & = -\frac{ \mathrm{e}^{-\frac{t}{2}}\sinh \left(\sqrt{\frac{1}{4}-\mu_k|\lambda|} \, t\right) }{\sqrt{\frac{1}{4}-\mu_k|\lambda|}} \, \mu_k |\lambda|  \widehat{u}_0(\lambda)_{ k,\ell} \notag \\
& \quad + \mathrm{e}^{-\frac{t}{2}} \! \left(\!\left(1- \frac{ 1}{2\sqrt{\frac{1}{4}-\mu_k|\lambda|}} \right) \mathrm{e}^{
\frac{1}{2}\sqrt{1-4\mu_k|\lambda|} \, t} +\left(1+ \frac{ 1}{2\sqrt{\frac{1}{4}-\mu_k|\lambda|}} \right) \mathrm{e}^{
-\frac{1}{2}\sqrt{1-4\mu_k|\lambda|} \, t} \right)  \widehat{u}_1(\lambda)_{ k,\ell}.  \notag
\end{align*}
Combining the last expression and \eqref{sqrt(1-4z) estimate}, we may estimate for $|\lambda|<1/(8\mu_k)$
\begin{align*}
|\partial_t \widehat{u}(t,\lambda)_{ k,\ell}|  &\lesssim \mathrm{e}^{-\frac{t}{2}+\frac{1}{2}\sqrt{1-4\mu_k|\lambda|} \, t} \mu_k |\lambda| \left( | \widehat{u}_0(\lambda)_{ k,\ell}|+ | \widehat{u}_1(\lambda)_{ k,\ell}|\right)+ \mathrm{e}^{-\frac{t}{2}-\frac{1}{2}\sqrt{1-4\mu_k|\lambda|} \, t}  | \widehat{u}_1(\lambda)_{ k,\ell}|  \\
& \lesssim \mathrm{e}^{-\mu_k|\lambda|  t} \mu_k |\lambda| \left( | \widehat{u}_0(\lambda)_{ k,\ell}|+ | \widehat{u}_1(\lambda)_{ k,\ell}|\right)+ \mathrm{e}^{-\frac{t}{2}}  | \widehat{u}_1(\lambda)_{ k,\ell}|.
\end{align*}
Therefore, combining the last inequality and \eqref{Parseval's identity}, it results
\begin{align}
J^{\mathrm{low}} & = c_n \sum_{k,\ell \in\mathbb{N}^n} \int_{0<|\lambda|<\frac{1}{8\mu_k}}  \big| \partial_t\widehat{u}(t,\lambda)_{ k,\ell}\big|^2 |\lambda|^n \,\mathrm{d}\lambda \notag \\
& \lesssim  \sum_{k,\ell \in\mathbb{N}^n} \mu_k^2 \int_{0<|\lambda|<\frac{1}{8\mu_k}} \mathrm{e}^{-2\mu_k|\lambda|  t}   \left( | \widehat{u}_0(\lambda)_{ k,\ell}|^2+ | \widehat{u}_1(\lambda)_{ k,\ell}|^2\right) |\lambda|^{n+2} \,\mathrm{d}\lambda  \notag \\ 
& \qquad + \mathrm{e}^{-t} \sum_{k,\ell \in\mathbb{N}^n} \int_{0<|\lambda|<\frac{1}{8\mu_k}}  | \widehat{u}_1(\lambda)_{ k,\ell}|^2 |\lambda|^n \,\mathrm{d}\lambda \notag \\
& = \sum_{k \in\mathbb{N}^n} \mu_k^2 \int_{0<|\lambda|<\frac{1}{8\mu_k}} \mathrm{e}^{-2\mu_k|\lambda|  t}    \left( \, \sum_{\ell \in\mathbb{N}^n} \big| \big(\widehat{u}_0(\lambda) e_k, e_{\ell} \big)_{L^2(\mathbb{R}^n)} \big|^2+ \big| \big(\widehat{u}_1(\lambda) e_k, e_{\ell} \big)_{L^2(\mathbb{R}^n)} \big|^2 \right) |\lambda|^{n+2} \,\mathrm{d}\lambda \notag \\ 
& \qquad + \mathrm{e}^{-t} \sum_{k \in\mathbb{N}^n} \int_{0<|\lambda|<\frac{1}{8\mu_k}} \sum_{\ell \in\mathbb{N}^n} \big| \big(\widehat{u}_1(\lambda) e_k, e_{\ell} \big)_{L^2(\mathbb{R}^n)} \big|^2 |\lambda|^n \,\mathrm{d}\lambda \notag\\
& = \sum_{k \in\mathbb{N}^n} \mu_k^2 \int_{0<|\lambda|<\frac{1}{8\mu_k}} \mathrm{e}^{-2\mu_k|\lambda|  t}    \left( \|\widehat{u}_0(\lambda) e_k\|_{L^2(\mathbb{R}^n)}^2+ \|\widehat{u}_1(\lambda) e_k\|_{L^2(\mathbb{R}^n)}^2 \right) |\lambda|^{n+2} \,\mathrm{d}\lambda \notag \\ 
& \qquad + \mathrm{e}^{-t} \sum_{k \in\mathbb{N}^n} \int_{0<|\lambda|<\frac{1}{8\mu_k}} \|\widehat{u}_1(\lambda) e_k\|_{L^2(\mathbb{R}^n)}^2 |\lambda|^n \,\mathrm{d}\lambda. \label{estimate for the remark}
\end{align} So, applying the inequality in \eqref{estimate u hat e k} and using the fact that the series \eqref{series convergent} converges, it follows
\begin{align}
J^{\mathrm{low}} \lesssim & \sum_{k \in\mathbb{N}^n} \mu_k^2 \int_{0}^{\frac{1}{8\mu_k}} \mathrm{e}^{-2\mu_k\lambda  t}    \lambda^{n+2} \,\mathrm{d}\lambda \,  \left( \|u_0\|_{L^1(\mathbf{H}_n)}^2+ \|u_1\|_{L^1(\mathbf{H}_n)}^2 \right) + \mathrm{e}^{-t} \sum_{k \in\mathbb{N}^n} \int_{0}^{\frac{1}{8\mu_k}} \lambda^n \,\mathrm{d}\lambda \,  \|u_1\|_{L^1(\mathbf{H}_n)}^2\notag  \\
\lesssim & \,  t^{-(n+3)} \sum_{k \in\mathbb{N}^n} \mu_k^{-(n+1)} \int_{0}^{\frac{t}{4}} \mathrm{e}^{-\theta}    \theta^{n+2} \,\mathrm{d}\theta \,  \left( \|u_0\|_{L^1(\mathbf{H}_n)}^2+ \|u_1\|_{L^1(\mathbf{H}_n)}^2 \right) + \mathrm{e}^{-t} \sum_{k \in\mathbb{N}^n}  \mu_k^{-(n+1)}  \,  \|u_1\|_{L^1(\mathbf{H}_n)}^2\notag \\
\lesssim & \,  t^{-\frac{\mathcal{Q}}{2}-2}    \left( \|u_0\|_{L^1(\mathbf{H}_n)}^2+ \|u_1\|_{L^1(\mathbf{H}_n)}^2 \right) + \mathrm{e}^{-t}\,    \|u_1\|_{L^1(\mathbf{H}_n)}^2,\label{estimate J low}
\end{align} where in second inequality we performed the change of variables $\theta= 2\mu_k \lambda t$ in the first integral.
Now we will prove that $J^{\mathrm{high}}$ decay with exponential speed with respect $t$. For $\lambda$ such that $1/(8\mu_k)<|\lambda|<1/(4\mu_k)$ by \eqref{representation formula du/dt elliptic zone} we get 
\begin{align} \label{intermediate estimate du/dt}
 \big| \partial_t\widehat{u}(t,\lambda)_{ k,\ell}\big| \lesssim \mathrm{e}^{-\frac{\delta}{2}t} \big(| \widehat{u}_0(\lambda)_{ k,\ell}|+ | \widehat{u}_1(\lambda)_{ k,\ell}|\big)
\end{align} for some $\delta>0$, that may change from line to line in the following. When $|\lambda|=1/(4\mu_k)$, by \eqref{representation u hat components} we have
\begin{align}
\partial_t\widehat{u}(t,\lambda)_{ k,\ell} = \mathrm{e}^{-\frac{t}{2}}   \widehat{u}_1(\lambda)_{ k,\ell}  - \tfrac{1}{2} \mathrm{e}^{-\frac{t}{2}} t \, \big(  \widehat{u}_1(\lambda)_{ k,\ell} +\tfrac{1}{2}\widehat{u}_0(\lambda)_{ k,\ell} \big) \label{representation formula du/dt separation line} 
\end{align}
while for $\lambda$ such that $1/(4\mu_k)<|\lambda|<1/(2\mu_k)$
\begin{align}
\partial_t \widehat{u}(t,\lambda)_{ k,\ell} & = -\frac{ \mathrm{e}^{-\frac{t}{2}}\sin \left(\sqrt{\mu_k|\lambda|-\frac{1}{4}} \, t\right) }{\sqrt{\mu_k|\lambda|-\frac{1}{4}}} \, \mu_k |\lambda|  \widehat{u}_0(\lambda)_{ k,\ell} \notag \\
& \quad + \mathrm{e}^{-\frac{t}{2}} \left(\cos \left(\sqrt{\mu_k|\lambda|-\frac{1}{4}} \, t\right) -\frac{ \sin \left(\sqrt{\mu_k|\lambda|-\frac{1}{4}} \, t\right)}{2\sqrt{\mu_k|\lambda|-\frac{1}{4}}} \right)  \widehat{u}_1(\lambda)_{ k,\ell}  \label{representation formula du/dt hyperbolic zone} 
\end{align} In the case $1/(4\mu_k)\leq |\lambda|<1/(2\mu_k)$ from \eqref{representation formula du/dt separation line} and \eqref{representation formula du/dt hyperbolic zone} we find again the estimate \eqref{intermediate estimate du/dt}. Finally, for $|\lambda| >1/(2\mu_k)$  we have to estimate
\begin{align*}
 \big| \partial_t\widehat{u}(t,\lambda)_{ k,\ell}\big| \lesssim \mathrm{e}^{-\frac{t}{2}} \big(\sqrt{\mu_k |\lambda|} \,| \widehat{u}_0(\lambda)_{ k,\ell}|+ | \widehat{u}_1(\lambda)_{ k,\ell}|\big).
\end{align*} Hence, applying the last estimate and \eqref{intermediate estimate du/dt} to the definition of $J^{\mathrm{high}}$, we arrive at
\begin{align}
J^{\mathrm{high}} & \lesssim \sum_{k,\ell \in\mathbb{N}^n}  \int_{|\lambda| >\frac{1}{8\mu_k}} \big| \partial_t\widehat{u}(t,\lambda)_{ k,\ell}\big|^2 |\lambda|^n \,\mathrm{d}\lambda \notag \\
& \lesssim \mathrm{e}^{-\delta t} \sum_{k,\ell \in\mathbb{N}^n}  \int_{|\lambda| >\frac{1}{8\mu_k}}  \big(\mu_k |\lambda| \,| \widehat{u}_0(\lambda)_{ k,\ell}|^2+ | \widehat{u}_1(\lambda)_{ k,\ell}|^2\big) |\lambda|^n \,\mathrm{d}\lambda\notag\\
& \lesssim \mathrm{e}^{-\delta t} \sum_{k,\ell \in\mathbb{N}^n}  \int_{|\lambda| >\frac{1}{8\mu_k}}  \Big(\mu_k |\lambda| \, \big| \big(\widehat{u}_0(\lambda) e_k,e_\ell\big)_{L^2(\mathbb{R}^n)}\big|^2+\big| \big(\widehat{u}_1(\lambda) e_k,e_\ell\big)_{L^2(\mathbb{R}^n)}\big|^2\Big) |\lambda|^n \,\mathrm{d}\lambda   \notag  \\
& \lesssim \mathrm{e}^{-\delta t}   \int_{\mathbb{R}^*} \sum_{k,\ell \in\mathbb{N}^n} \Big(\mu_k |\lambda| \, \big| \big(\widehat{u}_0(\lambda) e_k,e_\ell\big)_{L^2(\mathbb{R}^n)}\big|^2+\big| \big(\widehat{u}_1(\lambda) e_k,e_\ell\big)_{L^2(\mathbb{R}^n)}\big|^2\Big) |\lambda|^n \,\mathrm{d}\lambda   \notag \\
&\lesssim \mathrm{e}^{-\delta t}   \int_{\mathbb{R}^*} \Big(  \| (\nabla_{\hor} u_0)^\wedge(\lambda)\|_{\HS[L^2(\mathbb{R}^n)]}^2+ \|  \widehat{u}_1(\lambda)\|_{\HS[L^2(\mathbb{R}^n)]}^2 \Big) |\lambda|^n \,\mathrm{d}\lambda \notag  \\
&   \approx   \mathrm{e}^{-\delta t} \Big(\| \nabla_{\hor} u_0\|_{L^2(\mathbf{H}_n)}^2+\|u_1\|_{L^2(\mathbf{H}_n)}^2\Big), \label{estimate J high}
\end{align} where we employed
\begin{align*}
\sum_{k,\ell \in\mathbb{N}^n}  \big| \sqrt{\mu_k |\lambda|} \, \big(\widehat{u}_0(\lambda) e_k,e_\ell\big)_{L^2(\mathbb{R}^n)}\big|^2 \approx \sum_{j=1}^n   \Big(  \| (X_j u_0)^\wedge(\lambda)\|_{\HS[L^2(\mathbb{R}^n)]}^2+ \|  (Y_j u_0)^\wedge(\lambda)\|_{\HS[L^2(\mathbb{R}^n)]}^2 \Big)
\end{align*} in the second last step  and Plancherel formula in the last step.
\begin{remark} \label{remark du/dt}
Also in this case, if we had used only $L^2$ regularity for any $\lambda\in\mathbb{R}^*$, we would have found the estimate
\begin{align*}
\| \partial_t u(t,\cdot)\|^2_{L^2(\mathbf{H}_n)}\lesssim  \Big(\| \nabla_{\hor} u_0\|_{L^2(\mathbf{H}_n)}^2+\|u_1\|_{L^2(\mathbf{H}_n)}^2\Big)
\end{align*} which excludes the possibility of $J^{\mathrm{low}}$ to be estimate by a singular term as $t\to 0^+ $.
\end{remark}
Due  Remark \ref{remark du/dt}, by \eqref{estimate J low} and \eqref{estimate J high} we get eventually \eqref{estimate partial t u}. 

\begin{remark}  Besides the uniform estimate in Remark \ref{remark du/dt}, which is necessary to exclude a singular behavior of $\| \partial_t u(t,\cdot)\|_{L^2(\mathbf{H}_n)}$ as $t\to 0^+$, by using just $L^2$ regularity in the estimate for $\partial_t u(t,\cdot)$ and making more sharp the intermediate steps, we may get some decay rate as well (of course, weaker than the one we got by assuming additional $L^1$ regularity). Indeed, keeping the estimate of $J^{\mathrm{low}}$ in \eqref{estimate for the remark} and applying \eqref{Plancherel formula} in spite of \eqref{Riemann Lebesgue inequality}, we have
\begin{align*}
J^{\mathrm{low}} & \lesssim \sum_{k \in\mathbb{N}^n} \mu_k^2 \int_{0<|\lambda|<\frac{1}{8\mu_k}} \mathrm{e}^{-2\mu_k|\lambda|  t}    \left( \|\widehat{u}_0(\lambda) e_k\|_{L^2(\mathbb{R}^n)}^2+ \|\widehat{u}_1(\lambda) e_k\|_{L^2(\mathbb{R}^n)}^2 \right) |\lambda|^{n+2} \,\mathrm{d}\lambda\\ 
& \qquad + \mathrm{e}^{-t} \sum_{k \in\mathbb{N}^n} \int_{0<|\lambda|<\frac{1}{8\mu_k}} \|\widehat{u}_1(\lambda) e_k\|_{L^2(\mathbb{R}^n)}^2 |\lambda|^n \,\mathrm{d}\lambda \\
& \lesssim  t^{-2} \sum_{k \in\mathbb{N}^n}  \int_{0<|\lambda|<\frac{1}{8\mu_k}}     \left( \|\widehat{u}_0(\lambda) e_k\|_{L^2(\mathbb{R}^n)}^2+ \|\widehat{u}_1(\lambda) e_k\|_{L^2(\mathbb{R}^n)}^2 \right) |\lambda|^{n} \,\mathrm{d}\lambda\\ 
& \qquad + \mathrm{e}^{-t} \sum_{k \in\mathbb{N}^n} \int_{0<|\lambda|<\frac{1}{8\mu_k}} \|\widehat{u}_1(\lambda) e_k\|_{L^2(\mathbb{R}^n)}^2 |\lambda|^n \,\mathrm{d}\lambda \\
& \lesssim   t^{-2}    \left( \|u_0\|_{L^2(\mathbf{H}_n)}^2+ \|u_1\|_{L^2(\mathbf{H}_n)}^2 \right) + \mathrm{e}^{-t}\,    \|u_1\|_{L^2(\mathbf{H}_n)}^2,
\end{align*} where in the second step we used the inequality $|\lambda|^2 \mathrm{e}^{-2\mu_k|\lambda|  t}  \lesssim (\mu_k t )^{-2} $. Combining the last estimate and \eqref{estimate J high} we get \eqref{estimate partial t u only L2}.
\end{remark}

\subsection{Estimate of the $L^2(\mathbf{H}_n)$ - norm of $\nabla_{\hor}u(t,\cdot)$}

In this section, we estimate the $L^2(\mathbf{H}_n)$ - norm of the horizontal gradient of $u(t,\cdot)$, that is,
\begin{align*}
\| \nabla_{\hor} u(t,\cdot)\|_{L^2(\mathbf{H}_n)}^2 \doteq \sum_{j=1}^n \Big( \| X_j u(t,\cdot)\|_{L^2(\mathbf{H}_n)}^2+ \| Y_j u(t,\cdot)\|_{L^2(\mathbf{H}_n)}^2\Big).
\end{align*} Let us fix $1\leq j\leq n$. We start with the estimate of the $L^2(\mathbf{H}_n)$ - norm of $X_j u(t,\cdot)$.
Combining \eqref{pi lambda Xj} and \eqref{e k derivative}, we get
\begin{align*}
\big((X_j u)^\wedge (t,\lambda)e_k,e_\ell\big)_{L^2(\mathbb{R}^n)} & = \sqrt{|\lambda|} \Big(\sqrt{\tfrac{k\cdot \epsilon_j}{2}} \,  \big(\widehat{u} (t,\lambda)e_{k-\epsilon_j},e_\ell\big)_{L^2(\mathbb{R}^n)} - \sqrt{\tfrac{1}{2}} \big(\widehat{u} (t,\lambda)e_{k+\epsilon_j},e_\ell\big)_{L^2(\mathbb{R}^n)}\Big) \\
& = \sqrt{\tfrac{|\lambda|}{2}} \big(\sqrt{k\cdot \epsilon_j}\,  \widehat{u} (t,\lambda)_{k-\epsilon_j,\ell} -  \widehat{u} (t,\lambda)_{k+\epsilon_j,\ell}\big)
\end{align*} for any $k,\ell\in \mathbb{N}^n$. Consequently, by Plancherel formula we have
\begin{align*}
\| X_j u(t,\cdot)\|_{L^2(\mathbf{H}_n)}^2 & =   c_n \int_{\mathbb{R}^*}\| (X_j u)^\wedge  (t,\lambda)\|^2_{\HS [L^2(\mathbb{R}^n)]} \, |\lambda|^n \,\mathrm{d}\lambda  \\
& = \frac{c_n}{2}  \int_{\mathbb{R}^*}  \sum_{k,\ell \in\mathbb{N}^n} \big|\big(\sqrt{k\cdot \epsilon_j}\,  \widehat{u} (t,\lambda)_{k-\epsilon_j,\ell} -  \widehat{u} (t,\lambda)_{k+\epsilon_j,\ell}\big)\big|^2 |\lambda|^{n+1} \,\mathrm{d}\lambda  \\
& =   \frac{c_n}{2} \! \sum_{k,\ell \in\mathbb{N}^n} \!\left( \int_{0<|\lambda|<\frac{1}{8\mu_{k-\epsilon_j}}}  + \int_{|\lambda| >\frac{1}{8\mu_{k-\epsilon_j}}}   \right) \! \big|\sqrt{k\cdot \epsilon_j}\,  \widehat{u} (t,\lambda)_{k-\epsilon_j,\ell} -  \widehat{u} (t,\lambda)_{k+\epsilon_j,\ell}\big|^2 |\lambda|^{n+1}\, \mathrm{d}\lambda  \\ & \doteq K^{\mathrm{low}}+ K^{\mathrm{high}}.
\end{align*} Note that we slightly changed the partition of the domain of integration with respect to the previous sections.
We estimate $K^{\mathrm{high}}$ first. Clearly,
\begin{align*}
K^{\mathrm{high}} & \lesssim  \sum_{k,\ell \in\mathbb{N}^n} \int_{|\lambda|>\frac{1}{8\mu_{k-\epsilon_j}}}  \Big( k\cdot \epsilon_j \, \big|\widehat{u} (t,\lambda)_{k-\epsilon_j,\ell}|^2+\big|  \widehat{u} (t,\lambda)_{k+\epsilon_j,\ell}\big|^2 \Big) |\lambda|^{n+1}\, \mathrm{d}\lambda.
\end{align*} From the representation formula \eqref{representation u hat components}, for $\lambda$ such that $|\lambda|>1/(8\mu_{k-\epsilon_j})>1/(8\mu_{k+\epsilon_j})$ we get the estimates
\begin{align*}
\big| \widehat{u}(t,\lambda)_{ k-\epsilon_j,\ell}\big| &\lesssim \mathrm{e}^{-\frac{\delta}{2} t} \big(| \widehat{u}_0(\lambda)_{ k-\epsilon_j,\ell}|+ (\mu_{k-\epsilon_j} |\lambda|)^{-1/2}\, | \widehat{u}_1(\lambda)_{ k-\epsilon_j,\ell}|\big), \\
\big| \widehat{u}(t,\lambda)_{ k+\epsilon_j,\ell}\big| &\lesssim \mathrm{e}^{-\frac{\delta}{2} t} \big( | \widehat{u}_0(\lambda)_{ k+\epsilon_j,\ell}|+ (\mu_{k+\epsilon_j} |\lambda|)^{-1/2}\,  | \widehat{u}_1(\lambda)_{ k+\epsilon_j,\ell}|\big),
\end{align*}  where $\delta>0$ is a suitable constant. Hence,
\begin{align*}
K^{\mathrm{high}} & \lesssim \mathrm{e}^{-\delta t}  \sum_{k,\ell \in\mathbb{N}^n} \int_{|\lambda|>\frac{1}{8\mu_{k-\epsilon_j}}}  k\cdot \epsilon_j \, \Big(|\lambda|\, | \widehat{u}_0(\lambda)_{ k-\epsilon_j,\ell}|^2+ (\mu_{k-\epsilon_j})^{-1}\, | \widehat{u}_1(\lambda)_{ k-\epsilon_j,\ell}|^2\Big) |\lambda|^{n}\, \mathrm{d}\lambda \\
& \qquad + \mathrm{e}^{-\delta t}  \sum_{k,\ell \in\mathbb{N}^n} \int_{|\lambda|>\frac{1}{8\mu_{k-\epsilon_j}}} \Big(|\lambda|\, | \widehat{u}_0(\lambda)_{ k+\epsilon_j,\ell}|^2+ (\mu_{k+\epsilon_j})^{-1}\, | \widehat{u}_1(\lambda)_{ k+\epsilon_j,\ell}|^2\Big) |\lambda|^{n} \, \mathrm{d}\lambda \\
 & \lesssim \mathrm{e}^{-\delta t}  \sum_{k,\ell \in\mathbb{N}^n} \int_{|\lambda|>\frac{1}{8\mu_{k-\epsilon_j}}} \Big( \big|( k\cdot \epsilon_j |\lambda|)^{1/2} \,  \widehat{u}_0(\lambda)_{ k-\epsilon_j,\ell}|^2+  | \widehat{u}_1(\lambda)_{ k-\epsilon_j,\ell}|^2\Big) |\lambda|^{n}\, \mathrm{d}\lambda \\
& \qquad + \mathrm{e}^{-\delta t}  \sum_{k,\ell \in\mathbb{N}^n} \int_{|\lambda|>\frac{1}{8\mu_{k-\epsilon_j}}} \Big( |\,|\lambda|^{1/2} \, \widehat{u}_0(\lambda)_{ k+\epsilon_j,\ell}|^2+  | \widehat{u}_1(\lambda)_{ k+\epsilon_j,\ell}|^2\Big) |\lambda|^{n} \, \mathrm{d}\lambda,
\end{align*} where we used the inequalities $(k\cdot \epsilon_j)/(\mu_{k-\epsilon_j})\lesssim 1$ and $\mu_{k+\epsilon_j} \gtrsim 1$ in the second step. Also,
\begin{align}
K^{\mathrm{high}} & \lesssim \mathrm{e}^{-\delta t} \int_{\mathbb{R}^*}  \sum_{k,\ell \in\mathbb{N}^n}  \Big( \big|( k\cdot \epsilon_j |\lambda|)^{1/2} \,  \widehat{u}_0(\lambda)_{ k-\epsilon_j,\ell}|^2+  |\,|\lambda|^{1/2} \, \widehat{u}_0(\lambda)_{ k+\epsilon_j,\ell}|^2  \Big) |\lambda|^{n}\, \mathrm{d}\lambda \notag \\
& \qquad + \mathrm{e}^{-\delta t}  \int_{\mathbb{R}^*} \sum_{k,\ell \in\mathbb{N}^n}   \Big( | \widehat{u}_1(\lambda)_{ k-\epsilon_j,\ell}|^2 +  | \widehat{u}_1(\lambda)_{ k+\epsilon_j,\ell}|^2\Big) |\lambda|^{n} \, \mathrm{d}\lambda \notag \\
& \lesssim \mathrm{e}^{-\delta t} \Big( \| (X_j+iY_j)u_0\|^2_{L^2(\mathbf{H}_n)}+\| (X_j-iY_j)u_0\|^2_{L^2(\mathbf{H}_n)}+\| u_1\|^2_{L^2(\mathbf{H}_n)}\Big) \notag \\
& \lesssim \mathrm{e}^{-\delta t} \Big( \| X_j u_0\|^2_{L^2(\mathbf{H}_n)}+\| Y_j u_0\|^2_{L^2(\mathbf{H}_n)}+\| u_1\|^2_{L^2(\mathbf{H}_n)}\Big). \label{estimate K high}
\end{align} Note that in the previous chain of inequalities, according to \eqref{pi lambda Xj} and \eqref{pi lambda Yj}, we used 
\begin{align*}
\sum_{k,\ell \in\mathbb{N}^n} & \Big( \big|( k\cdot \epsilon_j |\lambda|)^{1/2} \,  \widehat{u}_0(\lambda)_{ k-\epsilon_j,\ell}|^2+  |\,|\lambda|^{1/2} \, \widehat{u}_0(\lambda)_{ k+\epsilon_j,\ell}|^2  \Big) \\
& = 2 \sum_{k,\ell \in\mathbb{N}^n}  \Big( \big| \big(\big(( X_j + i Y_j)u_0\big)^\wedge(\lambda) e_k,e_\ell \big)_{L^2(\mathbb{R}^n)}\big|^2+  \big|\big(\big(( X_j - i Y_j)u_0\big)^\wedge(\lambda) e_k,e_\ell \big)_{L^2(\mathbb{R}^n)}\big|^2  \Big) \\
& = 2 \, \Big(\big\| \big((X_j+iY_j)u_0\big)^\wedge(\lambda) \big\|^2_{\HS[L^2(\mathbb{R}^n)]}+\big\| \big((X_j-iY_j)u_0 \big)^\wedge (\lambda)\big\|^2_{\HS[L^2(\mathbb{R}^n)]}\Big).
\end{align*}
Next, we estimate $K^{\mathrm{low}}$. Combining
\begin{align*}
K^{\mathrm{low}} & \lesssim  \sum_{k,\ell \in\mathbb{N}^n} \int_{0<|\lambda|<\frac{1}{8\mu_{k-\epsilon_j}}}  \Big( k\cdot \epsilon_j \, \big|\widehat{u} (t,\lambda)_{k-\epsilon_j,\ell}|^2+\big|  \widehat{u} (t,\lambda)_{k+\epsilon_j,\ell}\big|^2 \Big) |\lambda|^{n+1}\, \mathrm{d}\lambda
\end{align*} and 
\begin{align*}
 \big|\widehat{u} (t,\lambda)_{k-\epsilon_j,\ell}|^2 & \lesssim \mathrm{e}^{- 2 \mu_{k-\epsilon_j}|\lambda| t} \left(|\widehat{u}_0(\lambda)_{k-\epsilon_j,\ell}|^2+|\widehat{u}_1(\lambda)_{k-\epsilon_j,\ell}|^2\right)   \\
 \big|\widehat{u} (t,\lambda)_{k+\epsilon_j,\ell}|^2 &  \lesssim \mathrm{e}^{- 2 \mu_{k+\epsilon_j}|\lambda| t} \left(|\widehat{u}_0(\lambda)_{k+\epsilon_j,\ell}|^2+|\widehat{u}_1(\lambda)_{k+\epsilon_j,\ell}|^2\right)  \\ & \lesssim \mathrm{e}^{- 2 \mu_{k-\epsilon_j}|\lambda| t} \left(|\widehat{u}_0(\lambda)_{k+\epsilon_j,\ell}|^2+|\widehat{u}_1(\lambda)_{k+\epsilon_j,\ell}|^2\right) 
\end{align*} for $|\lambda|<1/(8\mu_{k-\epsilon_j})$, where we used again \eqref{sqrt(1-4z) estimate} and $\mu_{k+\epsilon_j}>\mu_{k-\epsilon_j}$, we obtain
\begin{align}
K^{\mathrm{low}} & \lesssim  \sum_{k,\ell \in\mathbb{N}^n}  k\cdot \epsilon_j  \int_{0<|\lambda|<\frac{1}{8\mu_{k-\epsilon_j}}} \mathrm{e}^{- 2 \mu_{k-\epsilon_j}|\lambda| t}  \Big(|\widehat{u}_0(\lambda)_{k-\epsilon_j,\ell}|^2+|\widehat{u}_1(\lambda)_{k-\epsilon_j,\ell}|^2 \Big) |\lambda|^{n+1}\, \mathrm{d}\lambda  \notag\\
& \qquad + \sum_{k,\ell \in\mathbb{N}^n}  \int_{0<|\lambda|<\frac{1}{8\mu_{k-\epsilon_j}}} \mathrm{e}^{- 2 \mu_{k-\epsilon_j}|\lambda| t}  \Big( |\widehat{u}_0(\lambda)_{k+\epsilon_j,\ell}|^2+|\widehat{u}_1(\lambda)_{k+\epsilon_j,\ell}|^2\Big) |\lambda|^{n+1}\, \mathrm{d}\lambda \notag \\
 & \lesssim  \sum_{k\in\mathbb{N}^n}  k\cdot \epsilon_j  \int_{0<|\lambda|<\frac{1}{8\mu_{k-\epsilon_j}}} \mathrm{e}^{- 2 \mu_{k-\epsilon_j}|\lambda| t}  \Big( \|\widehat{u}_0(\lambda) e_{k-\epsilon_j}\|^2_{L^2(\mathbb{R}^n)}+\|\widehat{u}_1(\lambda) e_{k-\epsilon_j}\|^2_{L^2(\mathbb{R}^n)}\Big) |\lambda|^{n+1}\, \mathrm{d}\lambda \notag  \\
& \qquad + \sum_{k,\in\mathbb{N}^n}  \int_{0<|\lambda|<\frac{1}{8\mu_{k-\epsilon_j}}} \mathrm{e}^{- 2 \mu_{k-\epsilon_j}|\lambda| t}  \Big(\|\widehat{u}_0(\lambda) e_{k+\epsilon_j}\|^2_{L^2(\mathbb{R}^n)}+\|\widehat{u}_1(\lambda) e_{k+\epsilon_j}\|^2_{L^2(\mathbb{R}^n)}\Big) |\lambda|^{n+1}\, \mathrm{d}\lambda , \label{estimate for the remark 2}
\end{align} where in the second inequality we used Parseval's identity. Therefore, by \eqref{estimate u hat e k} it results
\begin{align}
K^{\mathrm{low}} & \lesssim  \sum_{k\in\mathbb{N}^n}  (k\cdot \epsilon_j +1) \int_{0}^{\frac{1}{8\mu_{k-\epsilon_j}}} \mathrm{e}^{- 2 \mu_{k-\epsilon_j} \lambda t}  \lambda^{n+1}\, \mathrm{d}\lambda  \, \Big( \|u_0\|^2_{L^1(\mathbf{H}_n)}+\|u_1\|^2_{L^1(\mathbf{H}_n)}\Big) \notag \\
& \lesssim  t^{-(n+2)}\sum_{k\in\mathbb{N}^n}  (k\cdot \epsilon_j +1) \mu_{k-\epsilon_j}^{-(n+2)}\int_{0}^{\frac{t}{4}} \mathrm{e}^{- \theta}  \theta^{n+1}\, \mathrm{d}\theta  \, \Big( \|u_0\|^2_{L^1(\mathbf{H}_n)}+\|u_1\|^2_{L^1(\mathbf{H}_n)}\Big)\notag  \\
& \lesssim  t^{-\frac{\mathcal{Q}}{2}-1}\sum_{k\in\mathbb{N}^n}  (k\cdot \epsilon_j +1) (|k|-1+n)^{-(n+2)} \, \Big( \|u_0\|^2_{L^1(\mathbf{H}_n)}+\|u_1\|^2_{L^1(\mathbf{H}_n)}\Big) \notag \\
& \lesssim  t^{-\frac{\mathcal{Q}}{2}-1}\, \Big( \|u_0\|^2_{L^1(\mathbf{H}_n)}+\|u_1\|^2_{L^1(\mathbf{H}_n)}\Big), \label{estimate K low}
\end{align} where we performed the change of variables $\theta=2\mu_{k-\epsilon_j} \lambda t$ in the second line and we employed the convergence of the series
\begin{align*}
\sum_{k\in\mathbb{N}^n}  (k\cdot \epsilon_j +1) (|k|-1+n)^{-(n+2)}<\infty
\end{align*}
in the last one. Similarly as in Remarks \ref{remark u} and \ref{remark du/dt}, we can exclude a singular coefficients with respect to $t$ as $t\to 0^+$ for $K^{\mathrm{low}}$. 
Summarizing, we proved
\begin{align*}
\| X_j u(t,\cdot)\|_{L^2(\mathbf{H}_n)}^2 \lesssim  (1+t)^{-\frac{\mathcal{Q}}{2}-1}  \Big(  \! \|u_0\|^2_{L^1(\mathbf{H}_n)}+\| X_j u_0\|^2_{L^2(\mathbf{H}_n)}+\| Y_j u_0\|^2_{L^2(\mathbf{H}_n)}+\|u_1\|^2_{L^1(\mathbf{H}_n)}+\| u_1\|^2_{L^2(\mathbf{H}_n)}\! \Big).
\end{align*} In a completely analogous way it is possible to prove the same kind of estimate for $\| Y_j u(t,\cdot)\|_{L^2(\mathbf{H}_n)}^2 $. So, this complete the proof of \eqref{estimate nabla hor u}.

\begin{remark} If we continue the estimate \eqref{estimate for the remark 2} for $K^{\mathrm{low}}$ by using \eqref{Plancherel formula} in place of \eqref{Riemann Lebesgue inequality}, we arrive at
\begin{align*}
K^{\mathrm{low}} & \lesssim  t^{-1} \sum_{k\in\mathbb{N}^n}  (k\cdot \epsilon_j ) \mu_{k-\epsilon_j}^{-1} \int_{0<|\lambda|<\frac{1}{8\mu_{k-\epsilon_j}}}  \Big( \|\widehat{u}_0(\lambda) e_{k-\epsilon_j}\|^2_{L^2(\mathbb{R}^n)}+\|\widehat{u}_1(\lambda) e_{k-\epsilon_j}\|^2_{L^2(\mathbb{R}^n)}\Big) |\lambda|^{n}\, \mathrm{d}\lambda   \\
& \qquad + t^{-1} \sum_{k,\in\mathbb{N}^n} \mu_{k-\epsilon_j}^{-1}  \int_{0<|\lambda|<\frac{1}{8\mu_{k-\epsilon_j}}} \Big(\|\widehat{u}_0(\lambda) e_{k+\epsilon_j}\|^2_{L^2(\mathbb{R}^n)}+\|\widehat{u}_1(\lambda) e_{k+\epsilon_j}\|^2_{L^2(\mathbb{R}^n)}\Big) |\lambda|^{n}\, \mathrm{d}\lambda  \\
 & \lesssim  t^{-1}    \int_{\mathbb{R}^*}  \sum_{k\in\mathbb{N}^n} \Big( \|\widehat{u}_0(\lambda) e_{k}\|^2_{L^2(\mathbb{R}^n)}+\|\widehat{u}_1(\lambda) e_{k}\|^2_{L^2(\mathbb{R}^n)}\Big) |\lambda|^{n}\, \mathrm{d}\lambda  \\
 & \lesssim  t^{-1}  \Big( \|u_0\|^2_{L^2(\mathbf{H}_n)}+\|u_1\|^2_{L^2(\mathbf{H}_n)}\Big)
\end{align*} where in the second step we applied the inequality
$$(k\cdot \epsilon_j ) \mu_{k-\epsilon_j}^{-1} \leq \frac{|k|}{2(|k|-1)+n}\lesssim 1$$ and the fact that the set of the eigenvalues of the harmonic oscillator is bounded from below by a positive constant. Thus, combining the previous estimate for $K^{\mathrm{low}}$ and \eqref{estimate K high}, we obtain \eqref{estimate nabla hor u only L2}.
\end{remark}

\section{Final remarks and future applications}

In Theorem \ref{Main Thm}, we restricted our considerations to the $L^2(\mathbf{H}_n)$ - estimate for the horizontal gradient of $u(t,\cdot)$. This choice seems to be quite natural due to the differential operator on the left-hand side of \eqref{damped wave equation Heisenberg}. Nevertheless, one might wonder what happens if we apply the machinery developed in Section \ref{Section Proof Main Thm} to the derivative $T u(t,\cdot)$ (as in the introduction, $T$ denotes here the generator of the second layer of the Lie algebra $\mathfrak{h}_n$). The infinitesimal generator of $\pi_\lambda$ acts on $T$ as follows:
\begin{align*}
\mathrm{d}\pi_\lambda(T)= i\lambda \, \mathrm{Id}.
\end{align*} As in Section \ref{Section Proof Main Thm}, if we employ Plancherel formula, we split the integral of containing the Hilbert-Schmidt norm of $(T u)^\wedge(t,\lambda)=i\lambda \widehat{u}(t,\lambda)$ in two regions and we use $L^1(\mathbf{H}_n)$ regularity for the data in the low $\lambda$ region and $L^2(\mathbf{H}_n)$ regularity for the data in the high $\lambda$ region, then, we end up with the estimate
\begin{align}\label{estimate Tu}
\| T u(t,\cdot)\|_{L^2(\mathbf{H}_n)} \lesssim (1+t)^{-\frac{\mathcal{Q}}{4}-1}  \Big(\! \|u_0\|_{L^1(\mathbf{H}_n)}+\| T u_0\|_{L^2(\mathbf{H}_n)}+\|u_1\|_{L^1(\mathbf{H}_n)}+\| T^{1/2}u_1\|_{L^2(\mathbf{H}_n)}+\| u_1\|_{L^2(\mathbf{H}_n)}\!\Big),
\end{align} where 
\begin{align*}
\| T^{1/2}u_1\|_{L^2(\mathbf{H}_n)}^2  \doteq c_n \int_{\mathbb{R}^*} \| \widehat{u}_1(\lambda)\|_{\HS [L^2(\mathbb{R}^n)]}^2 \, |\lambda|^{n+1} \, \mathrm{d}\lambda.
\end{align*}
So, in \eqref{estimate Tu} we get the same decay rate as in \eqref{estimate partial t u}, which is faster than the one for the horizontal gradient of $u(t,\cdot)$ in \eqref{estimate nabla hor u}, but we are forced somehow to require more regularity for $u_1$ on $L^2$ level. 

Concerning future applications for the tools derived in this work, in the forthcoming paper \cite{GP19DW}, the decay estimates proved in Theorem \ref{Main Thm} are going to be used for proving a global existence result for small data solutions for the semilinear damped wave equation with power nonlinearity on the Heisenberg group, namely, for the Cauchy problem
\begin{align}\label{damped wave equation Heisenberg semilinear}
\begin{cases}
\partial_t^2 u(t,\eta)-\Delta_{\hor} u(t,\eta)+\partial_t u(t,\eta)= |u|^p, & \eta\in \mathbf{H}_n, \ t>0, \\
u(0,\eta)= u_0(\eta), & \eta\in \mathbf{H}_n, \\
\partial_t u(0,\eta)= u_1(\eta), & \eta\in \mathbf{H}_n,
\end{cases}
\end{align}  requiring a suitable lower bound for $p>1$.
By following the main ideas from \cite{TY01,IT05} in the Euclidean case, an exponential weight function will be introduced. Hence, the global existence of small data solutions for exponents $p$ in the super-Fujita case will be shown in the corresponding weighted energy space. Note that this will provide a critical exponent of Fujita-type as for the corresponding semilinear heat equation on the Heisenberg group or on more general nilpotent Lie groups (cf. \cite{Ruz18,GP19,GP19Car}).

\section*{Acknowledgments}

The author is supported by the University of Pisa, Project PRA 2018 49. The author thanks Vladimir Georgiev (University of Pisa) for fruitful discussions on the topics of this work.




  \bibliographystyle{elsarticle-num-names} 



\end{document}